\newtheorem {claim}{Claim}
\newtheorem{theorem}{Theorem}[section]
\newtheorem{lemma}[theorem]{Lemma}
\newtheorem{pro}[theorem]{Problem}
\title{Solid bricks that every $b$-invariant edge is solitary}
\author{{\small\bf Yipei Zhang$^{1}$, Xiumei Wang$^{2}$}\thanks{Corresponding author. Email address: wangxiumei@zzu.edu.cn}\\
{\small $^{1}$School of Mathematics and Statistics, North China University of Water Resources and Electric Power,}\\
{\small Zhengzhou, Henan 450046,  China}\\
{\small $^{2}$School of Mathematics and Statistics, Zhengzhou University,}\\
{\small Zhengzhou, Henan 450001,  China}}
\date{}
\begin{document}
\maketitle

\begin{abstract}
A graph $G$ is a brick if it is 3-connected and $G-\{u,v\}$ has a perfect matching for any two distinct vertices $u$ and $v$ of $G$.
A brick $G$ is {\it solid} if for any two vertex disjoint odd cycles $C_1$ and $C_2$ of $G$,
$G-(V(C_1)\cup V(C_2))$ has no perfect matching.
Lucchesi and Murty proposed a problem concerning the characterization of bricks, distinct from $K_4$, $\overline{C_6}$ and the Petersen graph, in which every $b$-invariant edge is solitary.
In this paper, we show that for a solid brick $G$ of order $n$ that is distinct from $K_4$, every $b$-invariant edge of $G$ is solitary if and only if $G$ is a wheel $W_n$.\\

\noindent{\bf Keywords:}  Solid brick; Removable edge; $b$-invariant edge; Solitary \\
\end{abstract}

\section{Introduction}

All  graphs considered in this paper are finite and simple.
For any undefined notation and terminology, we follow \cite{BM08}.
Let $G$ be a graph with vertex set $V(G)$ and edge set $E(G)$.
The number of vertices in $G$ is referred to as its {\it order}.
The complete graph of order $n$ is denoted by $K_n$.
The {\it length} of a path or a cycle is defined as the number of its edges.
A path or cycle is {\it odd} or {\it even} according to the parity of its length.

For two nonempty proper subsets $X$ and $Y$ of $V(G)$, we use $E[X,Y]$ to denote the set of all the edges of $G$ with one end in $X$ and the other end
in $Y$.
For a nonempty proper subset $X$ of $V(G)$, we use $N(X)$ to denote the set of all the vertices  in $\overline{X}$ that have one neighbour in $X$, and use $\partial(X)$ to denote the set $E[X,\overline{X}]$, where $\overline{X}=V(G)\setminus X$.
The set $\partial(X)$ is referred to as a {\it cut} of $G$.
A cut $\partial(X)$ of $G$ is {\it trivial} if one of $X$ and $\overline{X}$ has precisely one vertex,
and is \emph{nontrivial} otherwise.
A cut $\partial(X)$ of $G$ is {\it tight} if each perfect matching of $G$ contains precisely one edge in $\partial(X)$.
For a cut $\partial(X)$ of $G$, we denote by
$G/X$ and $G/\overline{X}$ the two graphs obtained from $G$ by shrinking $X$ and $\overline{X}$ to a single vertex, respectively, and these two graphs are called the {\it $\partial(X)$-contractions} of $G$.

A connected graph $G$ of order at least two is \emph{matching covered} if every edge of $G$ is contained in a perfect matching of $G$.
A matching covered graph that is free of nontrivial tight cuts is
called a \emph{brick} if it is nonbipartite, and  a \emph{brace} otherwise.
Noting that if a matching covered graph $G$ has a nontrivial tight cut $\partial(X)$, then the two $\partial(X)$-contractions of $G$ are also matching covered.
By repeatedly performing such contractions until no nontrivial tight cuts remain, we obtain a list of graphs that are either bricks or braces.
This procedure is known as a {\it tight cut decomposition} of $G$.
Since the choice of  nontrivial tight cut at each  step may be different,
a matching covered graph may admit several tight cut decompositions.
However, Lov\'asz \cite{Lovasz1987} showed that any two tight cut decompositions of a matching covered graph yield the same list of bricks and braces (up to multiple edges).
These bricks are referred to as  the bricks of $G$, and their number, denoted by $b(G)$,  is well-defined. Note that $b(G)=1$ when $G$ is a brick.

Edmonds et al. \cite{ELP82} gave an equivalent definition of bricks by showing that a graph $G$ is a brick if and only if it is 3-connected and $G-\{u,v\}$ has a perfect matching for any two distinct vertices $u$ and $v$ of $G$. Thus, every brick has at least four vertices and its  minimum degree  is at least three.

\begin{figure}[h]
 \centering
 \includegraphics[width=0.8\textwidth]{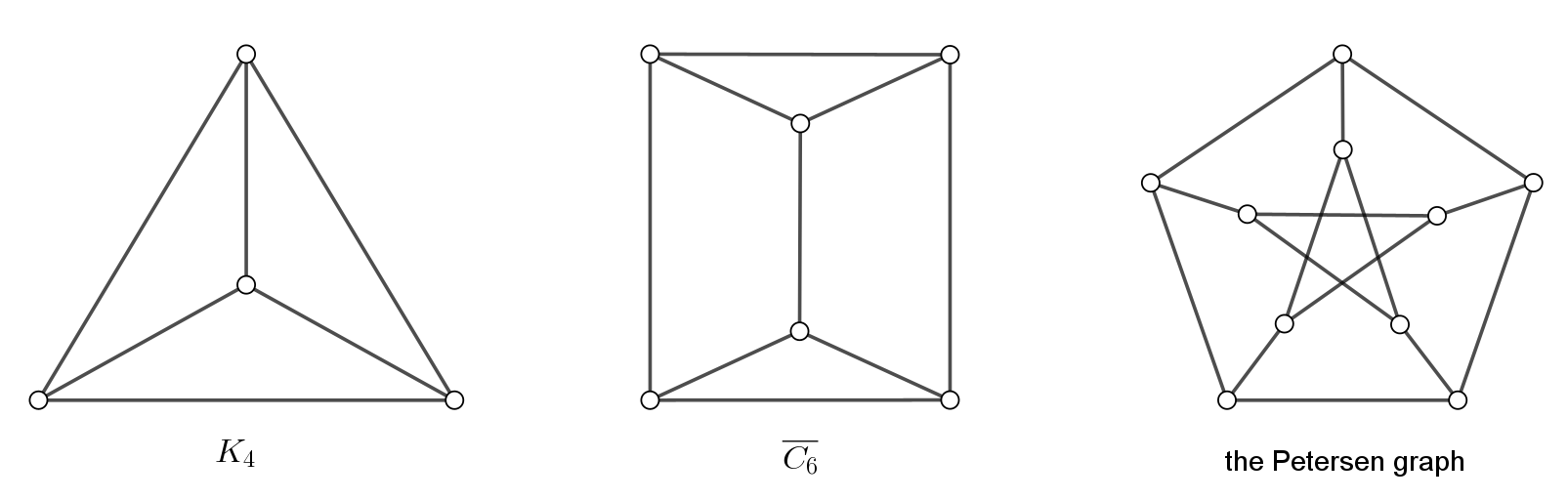}\\
 \caption{Three bricks.}\label{fig1}
\end{figure}

\begin{figure}[h]
 \centering
 \includegraphics[width=\textwidth]{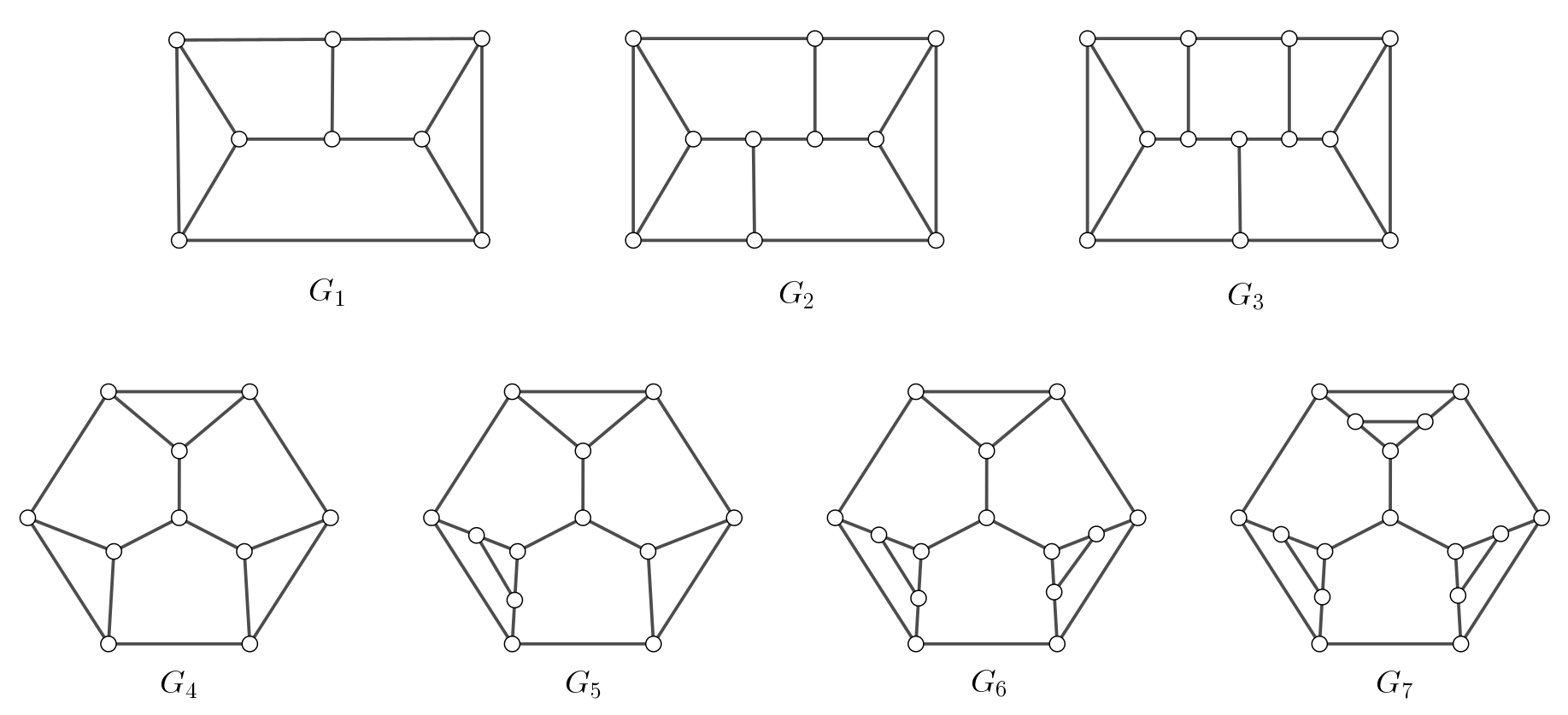}\\
 \caption{The family $\mathcal{G}$.}\label{fig2}
\end{figure}

Let $G$ be a matching covered graph.
An edge $e$ of $G$ is \emph{removable} if $G-e$ is also matching covered, and is {\it nonremovable} otherwise.
A removable edge $e$ of $G$ is \emph{$b$-invariant} if $b(G-e)=b(G)$.
For the existence of $b$-invariant edges in brick, Lov\'asz \cite{Lovasz1987} proposed the
conjecture that every brick different from $K_4$, $\overline{C_6}$ and the Petersen graph
(as shown in Figure \ref{fig1}) has a $b$-invariant edge.
Carvalho et al. \cite{CLM2002} confirmed this conjecture and  extended the result to include two $b$-invariant edges, with the exception of the graph $G_1$ in the family $\mathcal{G}$ (see Figure \ref{fig2}).
Kothari et al. \cite{KCLL2020} proved that every essentially 4-edge-connected cubic non-near-bipartite brick $G$, other than the Petersen graph, has at least $|V(G)|$ $b$-invariant edges.
They further conjectured that every essentially 4-edge-connected cubic near-bipartite bricks $G$,
other than $K_4$, has at least $\frac{|V(G)|}{2}$ $b$-invariant edges.
Lu et al. \cite{LFW2020} confirmed this conjecture and characterized the extremal graphs attaining this lower bound, which are prisms of order $4k+2$ and M\"{o}bius ladder of order $4k$ (where $k\geq2$).

An edge of a graph is {\it solitary} if it is contained in precisely one perfect matching of the graph, and is {\it nonsolitary} otherwise.
Recently, Lucchesi and Murty proposed the following problem (see Unsolved Problems 1 in \cite{LM2024}).

\begin{pro}[\cite{LM2024}]\label{pro}
Characterize bricks, distinct from $K_4$, $\overline{C_6}$ and the Petersen graph, in
which every $b$-invariant edge is solitary.
\end{pro}

For the Problem \ref{pro}, Zhang et al. \cite{ZLZ2025+} characterized the case when bricks are cubic, and  discovered the graph family $\mathcal{G}$, as shown in Figure \ref{fig2}.

\begin{theorem}[\cite{ZLZ2025+}]\label{cubic-brick}
Let $G$ be a cubic brick other than $K_4$, $\overline{C_6}$ and the Petersen graph. Every $b$-invariant edge of $G$ is solitary if and only if $G$ is one graph in $\mathcal{G}$.
\end{theorem}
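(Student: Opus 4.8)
The engine of the argument is a purely matching-theoretic reformulation of ``solitary''. For a cubic brick $G$ and an edge $e=uv$, the number of perfect matchings of $G$ containing $e$ equals the number of perfect matchings of $G-u-v$. Since $G$ is $3$-connected, $G-u-v$ is connected, and since $G$ is a brick it has a perfect matching. By Kotzig's classical result on graphs with a unique perfect matching, a connected graph with exactly one perfect matching has a vertex of degree $1$; in $G-u-v$, a degree-$1$ vertex is exactly a vertex of $G$ adjacent to both $u$ and $v$, and then $u,v$ and that vertex form a triangle. Hence \emph{every solitary edge of a cubic brick lies in a triangle}. In particular, under the hypothesis of Theorem~\ref{cubic-brick} every $b$-invariant edge of $G$ lies in a triangle, and — since every removable edge of a cubic brick distinct from $K_4$, $\overline{C_6}$ and the Petersen graph is $b$-invariant \cite{CLM2002} — so does every removable edge of $G$.

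Next I would dispose of the essentially $4$-edge-connected case. If a cubic brick on more than four vertices contains a triangle $T$, then $\partial(V(T))$ is a nontrivial $3$-edge-cut; equivalently, an essentially $4$-edge-connected cubic brick on at least six vertices is triangle-free. On the other hand, the results of Kothari et al.~\cite{KCLL2020} and Lu et al.~\cite{LFW2020} together imply that every essentially $4$-edge-connected cubic brick other than $K_4$ and the Petersen graph has at least $\tfrac12|V(G)|\ge 3$ $b$-invariant edges. By the first paragraph these would have to lie in triangles, contradicting triangle-freeness. Hence an essentially $4$-edge-connected cubic brick with the property of Theorem~\ref{cubic-brick} is $K_4$ or the Petersen graph, both of which are excluded; so every brick $G$ in the characterization has a nontrivial $3$-edge-cut, and (applying \cite{CLM2002} to produce a $b$-invariant edge and then the first paragraph) contains a triangle.

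The main step is an induction on $|V(G)|$ via triangle contraction. Fix a triangle $T=uvw$ of $G$. Since $G$ is a cubic brick distinct from $K_4$, the three edges of $\partial(V(T))$ have distinct ends $u',v',w'$ outside $T$, so the graph $G/T$ obtained by contracting $V(T)$ to a single vertex is a simple cubic graph; using the bijection between perfect matchings of $G$ and of $G/T$ (the matching inside $T$ being forced in each) one shows that $G/T$ is again a cubic brick and relates its removable, $b$-invariant and solitary edges to those of $G$. The aim is to prove that $G/T$ also satisfies the hypothesis of Theorem~\ref{cubic-brick} or is one of $K_4,\overline{C_6}$, the Petersen graph; by induction $G/T$ then belongs to $\mathcal G$ or to this basic list, so $G$ is obtained from a graph of $\mathcal G$ (or a basic brick) by expanding a vertex into a triangle. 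One then determines precisely which such expansions, and iterations thereof, keep every $b$-invariant edge solitary — these closure rules together with the graphs that first enter the family constitute exactly the description of $\mathcal G$ in Figure~\ref{fig2} — and, running the same analysis forwards, verifies that each member of $\mathcal G$ is a cubic brick in which every $b$-invariant edge is a solitary triangle edge; this yields the ``if'' direction.

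The hard part is this inductive step, and especially the exact identification of $\mathcal G$: one must track, under contraction, the fate of the edges of $T$ and of the three edges leaving $T$ — any of which may or may not be $b$-invariant — rule out the creation in $G/T$ of a $b$-invariant edge lying in no triangle, and, for the converse, show that the forbidden expansions do create a $b$-invariant edge outside every triangle (this is exactly what happens, for instance, when a single vertex of $\overline{C_6}$ is expanded into a triangle, which destroys the property). A further technicality is the treatment of the low-order base cases and of configurations in which several triangles share a vertex, since there $G/T$ may collapse onto $K_4$, $\overline{C_6}$ or the Petersen graph and the induction must terminate with care.
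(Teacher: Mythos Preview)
The paper does not prove Theorem~\ref{cubic-brick}. It is quoted from \cite{ZLZ2025+} and used as a black box in the necessity direction of Theorem~\ref{main-theorem} (to rule out the cubic case). There is therefore no ``paper's own proof'' to compare your sketch against.

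That said, your sketch has a genuine gap already in the first paragraph. You write that ``by Kotzig's classical result on graphs with a unique perfect matching, a connected graph with exactly one perfect matching has a vertex of degree~$1$''. Kotzig's theorem asserts only that such a graph has a \emph{bridge}, not a pendant vertex, and the stronger statement is false. For instance, take vertices $\{1,\dots,6\}$ with edges $12,34,56$ (the matching) together with $23,45,13,46$: this graph is connected, has minimum degree~$2$, and $\{12,34,56\}$ is its unique perfect matching. So your deduction ``solitary edge of a cubic brick $\Rightarrow$ edge lies in a triangle'' does not follow from the argument you give; if it is true, it needs a different proof that genuinely uses cubicity and $3$-connectivity of $G$.

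A second point: you assert that ``every removable edge of a cubic brick distinct from $K_4$, $\overline{C_6}$ and the Petersen graph is $b$-invariant'', citing \cite{CLM2002}. That reference guarantees the \emph{existence} of a $b$-invariant edge in such bricks; the stronger claim that \emph{all} removable edges are $b$-invariant is what Lemma~\ref{solid-removable-b} gives for \emph{solid} bricks, and cubic bricks in general are not solid (indeed every graph in $\mathcal G$ is nonsolid). If you want this implication you must supply an independent argument. Without it, the bridge from ``every $b$-invariant edge is solitary'' to ``every removable edge lies in a triangle'' is not available, and the essentially $4$-edge-connected paragraph and the subsequent induction both rest on it.
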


A brick $G$ is {\it solid} if for any two vertex disjoint odd cycles $C_1$ and $C_2$ of $G$,
$G-(V(C_1)\cup V(C_2))$ has no perfect matching,  and is {\it nonsolid} otherwise.
By the definition, we can easily see that each graph of $\mathcal{G}$ is nonsolid, and
wheels with even order defined below are solid bricks.
A {\it wheel} $W_n$ of order $n$ is the graph obtained from a cycle $C$ of length $n-1$ by
adding a new vertex and connecting this vertex to each vertex of the cycle $C$.
The cycle is called the {\it rim}, the new vertex the {\it hub}, and each edge joining
the hub to the rim a {\it spoke}. Note that $n\geq4$.
When $n=4$, $W_n$ is $K_4$, which has no removable edge.
When $n\geq6$ and $n$ is even, it is straightforward to verify that
each spoke of $W_n$ is removable and all the other edges are nonremovable.
In this paper, we characterized the case in Problem \ref{pro} when bricks are solid.
The following is our main result.

\begin{theorem}\label{main-theorem}
Let $G$ be a solid brick of order $n$ distinct from $K_4$. Then every $b$-invariant edge of $G$ is solitary if and only if $G$ is a wheel $W_n$.
\end{theorem}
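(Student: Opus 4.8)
The plan is to treat the two implications separately; the forward (``if'') direction is routine and the converse carries all the weight.

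For the ``if'' direction, suppose $G=W_n$, so $n$ is even and $n\ge 6$ since $W_4=K_4$ is excluded; write $h$ for the hub and $v_1v_2\cdots v_{n-1}v_1$ for the rim. In any perfect matching of $W_n$ the hub is covered by a unique spoke $hv_i$, after which the remaining vertices span the path $v_{i+1}\cdots v_{i-1}$ on $n-2$ vertices, which has exactly one perfect matching. Hence $W_n$ has precisely $n-1$ perfect matchings and each spoke lies in exactly one of them, i.e.\ every spoke is solitary. Since the spokes are exactly the removable edges of $W_n$ and every $b$-invariant edge is by definition removable, every $b$-invariant edge of $W_n$ is solitary.

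For the converse, let $G$ be a solid brick of order $n\ge 6$ with $G\ne K_4$ in which every $b$-invariant edge is solitary. I would first use the known fact (Carvalho--Lucchesi--Murty) that in a solid brick every removable edge is $b$-invariant, and that a solid brick other than $K_4$ has a removable edge; so the hypothesis becomes: \emph{every removable edge of $G$ is solitary}, and there is at least one. The heart of the argument is the claim that \emph{all removable edges of $G$ share a common end}. To approach it, take removable edges $e=uv$ and $f=xy$ with $\{u,v\}\cap\{x,y\}=\varnothing$. Because $e$ is removable, $G-e$ is matching covered, so $f$ lies in a perfect matching $Q$ of $G-e$; then $Q$ is a perfect matching of $G$ through $f$ that misses $e$, and by solitariness of $f$ it is the \emph{only} perfect matching of $G$ through $f$. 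Symmetrically, the unique perfect matching through $e$ misses $f$. Thus $e$ and $f$ never appear together in a perfect matching of $G$, and, writing $M_e,M_f$ for the unique perfect matchings through $e$ and $f$, the symmetric difference $M_e\triangle M_f$ carries an $M_e$-alternating cycle through $e$ and one through $f$.

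The remaining work in this claim is to turn the existence of those alternating cycles, together with the internally disjoint $u$--$v$ and $x$--$y$ paths supplied by $3$-connectivity, into a pair of vertex-disjoint odd cycles whose deletion leaves a graph with a perfect matching, contradicting solidity. \emph{I expect this extraction to be the main obstacle}: the parities of the auxiliary paths are not immediately controlled, so it may be cleaner to argue instead through the structure of $G-u-v$ (which, since $e$ is solitary, has a unique perfect matching and hence, by Kotzig's theorem, an essentially tree-like shape relative to that matching), or through the characterization of solid bricks as bricks without a nontrivial separating cut.

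Granting the claim, let $h$ be the common end of all removable edges. Then I would finish as follows. The elementary observation that a cubic vertex $w$ with neighbours $a,b,c$ and $b\sim c$ has $wa$ non-removable (a perfect matching of $G-wa$ using the edge $bc$ would leave $w$ exposed), together with the fact that in a solid brick other than $K_4$ every vertex has at most two non-removable incident edges, gives: every $w\ne h$ has a removable incident edge, so by the claim $w\sim h$; hence $h$ is adjacent to all other vertices. Moreover, if some $w\ne h$ had degree at least $4$, its at least three neighbours other than $h$ would each yield a non-removable edge at $w$, a contradiction; so every $w\ne h$ is cubic. Thus $G-h$ is $2$-regular, and since $G$ is $3$-connected, $G-h$ is $2$-connected, hence a single cycle; therefore $G=W_n$, with $n$ even because the rim is an odd cycle. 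The finitely many solid bricks of order $6$ not covered by this degree argument are checked directly: $W_6$ is the only solid brick on six vertices.
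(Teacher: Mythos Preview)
Your ``if'' direction is fine and matches the paper's Lemma~\ref{Wn-b-solitary}. The finishing step of your converse---once you grant that all removable edges share a common end $h$---is correct and rather elegant: Lemma~\ref{solid-two-nonremovable} forces every $w\ne h$ to carry a removable edge, hence $w\sim h$; every non-spoke at $w$ is then nonremovable, so $d(w)=3$; and $3$-connectivity makes $G-h$ a single cycle. (The passage from ``any two removable edges meet'' to ``all share a common end'' is safe here: an intersecting family of edges with no common point is a triangle, which would leave every vertex outside the triangle with only nonremovable incident edges, contradicting Lemma~\ref{solid-two-nonremovable} and $\delta\ge 3$.)

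The genuine gap is exactly where you flag it: you do not prove the central claim. From two disjoint removable (hence solitary) edges $e,f$ you correctly extract that $M_e\triangle M_f$ is a union of $M_e$-alternating cycles containing $e$ and $f$, but these cycles are \emph{even}, so they do not directly witness a violation of solidity; converting them, via $3$-connectivity paths or otherwise, into two vertex-disjoint odd cycles with a perfect matching outside is precisely the delicate parity bookkeeping you have not carried out. Your alternative suggestions (Kotzig on $G-u-v$, or nontrivial separating cuts) are reasonable heuristics but are not developed into an argument, and the bare assertion that $W_6$ is the only solid brick on six vertices is not justified.

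The paper takes a different route that sidesteps your obstacle. It first invokes Theorem~\ref{cubic-brick} to rule out the cubic case, so some vertex $u$ has degree $\ge 4$; then Lemma~\ref{2-nonsolitary} gives two solitary edges $uu_1,uu_2$ \emph{at the same vertex} $u$. The symmetric difference of the two unique matchings of $G-\{u,u_1\}$ and $G-\{u,u_2\}$ is then a single path $P$ from $u_1$ to $u_2$, and a sequence of lemmas (Lemmas~\ref{lemma31}--\ref{p1}) uses solidity and the absence of alternating cycles to pin down the neighbourhood of every vertex on $P$ (and on an auxiliary extension $P_1$) as $\{u,\text{two rim neighbours}\}$, forcing the wheel. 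The odd cycles needed to contradict solidity are produced explicitly inside this path structure rather than abstractly from $3$-connectivity. So your proposed strategy is conceptually clean, but as written it is missing the technical core that the paper supplies through its path-based analysis.
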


We organize the rest of the paper as follows. In Section \ref{section 2}, we present some notion and lemmas. In Section \ref{section 3}, we give a proof of Theorem \ref{main-theorem}.

\section{Preliminaries}\label{section 2}

Let $F$ and $G$ be two graphs. If $F$ is a subgraph of $G$, we denote it by $F\subseteq G$; otherwise, denote it by $F\varsubsetneq G$.
Let $M$ be a matching of $G$.
An {\it $M$-alternating path} or {\it cycle} of $G$ is a path or cycle whose edges are alternately in $M$ and $E(G)\backslash M$.
Note that an $M$-alternating path may or may not start or end with edges of $M$.
In particular, an $M$-alternating path neither starts nor ends with an edge of $M$ is called an \emph{open $M$-alternating path}. For instance, each edge of $E(G)\backslash M$ is an open $M$-alternating path. Clearly, the length of an open $M$-alternating path is odd.
For convenience, if $C$ is an odd cycle of $G$ such that $M\cap E(C)$ is the (unique) perfect matching of $C-u$, where $u\in V(C)$, then we say that the vertex $u$ is reachable from itself by an open $M$-alternating path.

\begin{lemma} [\cite{CLM2002}]\label{solid-removable-b}
In a solid brick, every removable edge is also b-invariant.
\end{lemma}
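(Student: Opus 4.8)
The plan is to argue by contradiction, reducing to the characterization of solid bricks in terms of separating cuts. Recall (Carvalho, Lucchesi and Murty) that a brick is solid if and only if it has no nontrivial separating cut, where a cut $\partial(X)$ is \emph{separating} if both of its contractions $G/X$ and $G/\overline{X}$ are matching covered (every tight cut is separating, but not conversely). Let $G$ be a solid brick and $e=uv$ a removable edge; then $G-e$ is matching covered and, since $b(G)=1$, it suffices to prove $b(G-e)=1$. First I would note that $G-e$ is non-bipartite: if it were bipartite with parts $A,B$, then $e$ would join two vertices of the same part, whence in $G-u-v$ the part lacking a vertex would be adjacent only to a strictly larger part, so $G-u-v$ would have no perfect matching, contradicting that $G$ is a brick. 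Hence $b(G-e)\ge 1$, and we may assume for contradiction that $b(G-e)\ge 2$. By the theory of tight cut decompositions, $G-e$ then has a nontrivial tight cut $\partial(X)$ both of whose contractions are non-bipartite (peel off bipartite pieces until the non-bipartite remainder still carries two bricks); moreover, taking $|X|$ minimum among such cuts, one checks that $(G-e)/\overline{X}$ is in fact a brick.

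Now return to $G$. Since $G$ is a brick it has no nontrivial tight cut, so $\partial_{G}(X)$ is not tight; the aim is to show it is nevertheless a nontrivial separating cut of $G$, contradicting solidity. I would split according to how $e$ meets $\partial_{G}(X)$. If $e\notin\partial_{G}(X)$, say $u,v\in\overline{X}$, then $G/\overline{X}=(G-e)/\overline{X}$ is matching covered, while $G/X=(G-e)/X+e$; using that $(G-e)/\overline{X}$ is a brick, and hence that $(G-e)/\overline{X}-u-v$ has a perfect matching, one lifts this back to a perfect matching of $G$ that contains $e$ and meets $\partial_{G}(X)$ exactly once, and its image in $G/X$ covers $e$, so $G/X$ is matching covered. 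If $e\in\partial_{G}(X)$, say $u\in X$ and $v\in\overline{X}$, then $\partial_{G-e}(X)=\partial_{G}(X)\setminus\{e\}$, and $G/\overline{X}=(G-e)/\overline{X}+u\hat{y}$ while $G/X=(G-e)/X+\hat{x}v$; here one needs perfect matchings of $G[X]-u$ and of $G[\overline{X}]-v$, which arise by deleting the two relevant vertices from brick contractions of $G-e$, so once more both contractions of $G$ along $\partial_{G}(X)$ are matching covered. In every case $\partial_{G}(X)$ is a nontrivial separating cut of $G$, the desired contradiction. (Equivalently, one can bypass the cited characterization and extract from a non-tight separating cut of $G$ two vertex-disjoint odd cycles, one inside $G[X]$ and one inside $G[\overline{X}]$, together with a perfect matching of the rest, contradicting the definition of a solid brick directly.)

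The main obstacle is exactly this lifting step — transferring a nontrivial tight cut of $G-e$ back to a separating cut of $G$ once $e$ is restored — and in particular the case $e\in\partial_{G}(X)$, where one must produce perfect matchings on \emph{both} sides of the cut after deleting an endpoint of $e$. The danger is that the edge produced from $e$ in one of the contractions might fail to lie in any perfect matching of that contraction; the way around it is to arrange, through the choice of the tight cut $\partial(X)$ in $G-e$, that the relevant contractions of $G-e$ are bricks (so that deleting two of their vertices still leaves a perfect matching), if necessary after a further refinement of the decomposition on the side that is not yet a brick. Everything else — the non-bipartiteness of $G-e$, the existence of a suitable tight cut with two non-bipartite contractions, and the final appeal to the separating-cut characterization of solidity — is routine.
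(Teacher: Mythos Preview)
The paper does not prove this lemma at all: it is quoted from \cite{CLM2002} and used as a black box. So there is no ``paper's own proof'' to compare against here.

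Your strategy --- reduce to the Carvalho--Lucchesi--Murty characterisation of solid bricks as bricks with no nontrivial separating cut, and show that a nontrivial tight cut of $G-e$ lifts to a nontrivial separating cut of $G$ --- is indeed the approach taken in the cited source. However, the execution has a concrete gap in the case $e\notin\partial_G(X)$. With $u,v\in\overline{X}$ you write ``using that $(G-e)/\overline{X}$ is a brick, and hence that $(G-e)/\overline{X}-u-v$ has a perfect matching'', but under the paper's convention $G/\overline{X}$ is obtained by \emph{shrinking} $\overline{X}$, so its vertex set is $X\cup\{\hat y\}$ and the vertices $u,v$ simply do not live there. What you actually need is a perfect matching of $(G-e)/X-u-v$, i.e.\ bicriticality on the \emph{other} contraction, and your minimality hypothesis on $|X|$ gives you a brick on the wrong side. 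The same mismatch recurs in the case $e\in\partial_G(X)$: you need perfect matchings of $G[X]-u$ and $G[\overline{X}]-v$, and only one of the two contractions has been arranged to be a brick.

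You diagnose the obstacle correctly in your final paragraph, but the proposed remedy (``arrange \ldots\ that the relevant contractions of $G-e$ are bricks'') is not something a single minimality choice delivers, since the endpoint(s) of $e$ may fall on either shore. In \cite{CLM2002} this is handled through the machinery of near-bricks and robust cuts rather than by an ad hoc choice of $X$; if you want a self-contained argument, you would need either a two-sided minimality (choose $\partial(X)$ so that \emph{both} contractions are bricks, which requires justification) or a direct construction of the required perfect matchings from the brick property of $G$ itself rather than from the contractions of $G-e$.
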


\begin{lemma} [\cite{CLM2012}]\label{solid-two-nonremovable}
If $G$ is a solid brick with at least six vertices, then for every vertex $v$ of $G$, at most two edges
incident with $v$ are nonremovable in $G$.
\end{lemma}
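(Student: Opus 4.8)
The plan is to argue by contradiction using the solidity of $G$. Suppose that some vertex $v$ is incident with three nonremovable edges $e_i=vv_i$ ($i=1,2,3$). First I establish that each $e_i$ exposes a nontrivial barrier of $G-e_i$. As $G$ is a brick it is matching covered, so $G-e_i$ has a perfect matching; since $e_i$ is nonremovable, $G-e_i$ is not matching covered, so some edge $x_iy_i$ of $G-e_i$ lies in no perfect matching of $G-e_i$, whence $(G-e_i)-x_i-y_i$ has no perfect matching, and the Tutte--Berge formula gives a set $T_i$ with $o\big((G-e_i)-x_i-y_i-T_i\big)\ge|T_i|+2$. Putting $B_i:=T_i\cup\{x_i,y_i\}$ one gets $o\big((G-e_i)-B_i\big)=|B_i|\ge 2$, so $B_i$ is a nontrivial barrier of $G-e_i$; enlarging it I may assume $B_i$ is a maximal barrier, so that every component of $(G-e_i)-B_i$ is factor-critical (in particular odd). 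Since the only barriers of a brick are singletons, $B_i$ is not a barrier of $G$; hence on reinserting $e_i$ the number of odd components must drop, which forces $v,v_i\notin B_i$ and forces $v$ and $v_i$ to lie in distinct components $Q_i\ni v$ and $R_i\ni v_i$ of $(G-e_i)-B_i$. Note also that every component $L$ of $(G-e_i)-B_i$ gives a tight cut $\partial_{G-e_i}(V(L))$ of $G-e_i$, and, by factor-criticality, for any near-perfect matching $N$ of $L$ missing a vertex $u$ the vertex $u$ is reachable from itself by an open $N$-alternating path.

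Second, I record the local picture at $v$. Since $v\in Q_i$ and $v\notin B_i$, we have $N_{G}(v)\setminus\{v_i\}=N_{G-e_i}(v)\subseteq V(Q_i)\cup B_i$; thus for $j\ne i$ the vertex $v_j$ lies in $V(Q_i)\cup B_i$, and $e_j$ is either an edge of $Q_i$ or joins $v$ to $B_i$. The heart of the matter is then to produce, for a well-chosen index $i$, a perfect matching $M$ of $G-e_i$ in which $v$ is the unique vertex of $Q_i$ not matched inside $Q_i$ and $v_i$ is the unique such vertex of $R_i$ --- equivalently $M\cap E(Q_i)$ (resp. $M\cap E(R_i)$) is a near-perfect matching of $Q_i$ missing $v$ (resp. of $R_i$ missing $v_i$) --- and in which, moreover, the $B_i$-endpoints of the two cross edges $M(v)$ and $M(v_i)$ can be matched to each other, or rerouted through another component, inside $(G-e_i)-V(Q_i)-V(R_i)$. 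This is where the hypothesis $|V(G)|\ge 6$ enters: if $v$ had no neighbour in $B_i$ for any $i$, then $N_G(v)\setminus\{v_i\}\subseteq V(Q_i)$ for every $i$, which over the three indices pins the three decompositions into a rigid shape; a short case analysis shows this shape cannot occur in a solid brick on at least six vertices, since in the smallest instances it collapses to the configuration of $K_4$.

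Third, given such an $M$, set $N_i=M\cap E(Q_i)$ and $N_i'=M\cap E(R_i)$. By the reachability fact above there is an odd cycle $C_1$ through $v$ with $N_i\cap E(C_1)$ the perfect matching of $C_1-v$, so that $N_i\setminus E(C_1)$ is a perfect matching of $Q_i-V(C_1)$; similarly there is an odd cycle $C_2$ through $v_i$ inside $R_i$ with $N_i'\setminus E(C_2)$ a perfect matching of $R_i-V(C_2)$. The cycles $C_1$ and $C_2$ are vertex-disjoint, since they lie in the disjoint components $Q_i$ and $R_i$, and combining $N_i\setminus E(C_1)$, $N_i'\setminus E(C_2)$, the restriction of $M$ to the remaining components and to $B_i$, and the single repair edge inside $B_i$, one obtains a perfect matching of $G-(V(C_1)\cup V(C_2))$; this contradicts the solidity of $G$ and completes the proof. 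The main obstacle is the middle step: securing the perfect matching $M$ with the two prescribed leftovers $v$ and $v_i$ and with compatible $B_i$-endpoints, and dispatching the rigid degenerate configurations that are excluded only by $|V(G)|\ge 6$. Everything around it is bookkeeping with tight cuts and factor-critical near-perfect matchings.
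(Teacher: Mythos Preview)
The paper does not give its own proof of this lemma; it is simply quoted from \cite{CLM2012} and used as a black box. So there is no argument in the paper to compare your attempt against.

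Regarding your sketch on its own merits: the overall strategy --- pull a nontrivial maximal barrier $B_i$ out of each nonremovable edge, locate $v$ and $v_i$ in distinct factor-critical components $Q_i,R_i$ of $(G-e_i)-B_i$, and then manufacture two disjoint odd cycles whose deletion leaves a perfect matching --- is a natural way to contradict solidity. But the proof is not complete, and the gap is precisely where you yourself flag it. The ``middle step'' is the entire content: you must exhibit a perfect matching $M$ of $G-e_i$ whose cross-edges at $v$ and at $v_i$ hit two vertices of $B_i$ that can be re-matched (to each other or through another component) once $Q_i$ and $R_i$ are removed, and you never do this --- you only treat, by hand-wave, the sub-case in which $v$ has no neighbour in any $B_i$. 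In addition, your third step needs $|Q_i|\ge 3$ and $|R_i|\ge 3$ to get the odd cycles $C_1,C_2$; but either component can perfectly well be a singleton (e.g.\ $Q_i=\{v\}$ whenever all neighbours of $v$ other than $v_i$ lie in $B_i$), and then there is no $C_1$ at all. Dispatching these degenerate cases and coordinating the three barriers $B_1,B_2,B_3$ is exactly the substantive work carried out in \cite{CLM2012}; it is not a short case analysis. What you have is a reasonable plan, not a proof.
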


By Lemmas \ref{solid-removable-b} and \ref{solid-two-nonremovable}, one can easily get the following lemma.

\begin{lemma} \label{2-nonsolitary}
Let $G$ be a solid brick with at least six vertices and let $v$ be a vertex of $G$. If every $b$-invariant edge of $G$ is solitary, then at most two edges incident with $v$ are nonsolitary in $G$.
\end{lemma}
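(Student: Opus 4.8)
The plan is to argue on the level of individual edges incident with $v$: I will show that every \emph{nonsolitary} edge incident with $v$ is forced to be \emph{nonremovable}, and then simply invoke Lemma~\ref{solid-two-nonremovable} to cap the number of such edges at two.

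First I would fix a vertex $v$ of the solid brick $G$ with $|V(G)|\ge 6$, and assume the standing hypothesis that every $b$-invariant edge of $G$ is solitary. Let $e$ be any edge incident with $v$ that is \emph{removable}. Since $G$ is a solid brick, Lemma~\ref{solid-removable-b} tells us that $e$ is automatically $b$-invariant; hence, by the hypothesis, $e$ is solitary. Contraposing this chain of implications, every nonsolitary edge incident with $v$ must be nonremovable.

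Next I would apply Lemma~\ref{solid-two-nonremovable}: because $G$ is a solid brick on at least six vertices, at most two edges incident with $v$ are nonremovable in $G$. Combining this with the conclusion of the previous step, at most two edges incident with $v$ can be nonsolitary, which is precisely the assertion of the lemma.

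There is essentially no genuine obstacle in this argument, which is why the excerpt advertises it as ``easily'' derived; the only point demanding a moment's care is the logical direction, namely that the implication ``removable $\Rightarrow$ $b$-invariant $\Rightarrow$ solitary'' contraposes to ``nonsolitary $\Rightarrow$ nonremovable'' and not the converse. The hypothesis $|V(G)|\ge 6$ is invoked solely to make Lemma~\ref{solid-two-nonremovable} available, and no further structural analysis of $G$ is needed.
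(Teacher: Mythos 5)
Your argument is correct and is exactly the intended derivation: the paper itself only remarks that the lemma follows from Lemmas \ref{solid-removable-b} and \ref{solid-two-nonremovable}, and your contrapositive chain ``nonsolitary $\Rightarrow$ not $b$-invariant $\Rightarrow$ nonremovable'' is the precise way to combine them. Nothing further is needed.
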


\begin{lemma} \label{Wn-b-solitary}
Every $b$-invariant edge of $W_n$ is solitary, where $n$ is even.
\end{lemma}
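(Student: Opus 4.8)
The plan is to prove something slightly stronger than stated, namely that every \emph{removable} edge of $W_n$ is solitary; since every $b$-invariant edge is by definition removable, this immediately yields the lemma. Write $h$ for the hub of $W_n$ and $v_1, v_2, \dots, v_{n-1}$ for the vertices of the rim $C$ listed in cyclic order, so that $C$ is an odd cycle because $n-1$ is odd. When $n = 4$ we have $W_4 = K_4$, which has no removable edge at all, and the statement holds vacuously; so we may assume $n \ge 6$, and then, as recorded in the introduction, the removable edges of $W_n$ are precisely its spokes.

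The main step is to fix a spoke $e = hv_i$ and count the perfect matchings of $W_n$ containing it. Any perfect matching $M$ with $e \in M$ uses $hv_i$ to cover both $h$ and $v_i$, so $M - e$ is a perfect matching of $W_n - \{h, v_i\}$. But $W_n - \{h, v_i\}$ is exactly the path $v_{i+1}v_{i+2}\cdots v_{i-1}$ obtained from the rim by deleting $v_i$; it has $n - 2$ vertices, and since $n$ is even, $n - 2$ is even. A path on an even number of vertices has a unique perfect matching, so there is exactly one perfect matching of $W_n$ that contains $e$. Hence $e$ is solitary.

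Combining the two steps: for $n \ge 6$ every $b$-invariant edge of $W_n$ is removable, hence a spoke, hence solitary by the previous paragraph; and for $n = 4$ the statement is vacuous. This completes the proof. I do not anticipate any genuine obstacle here: the entire content is the parity observation that deleting the hub and one rim vertex leaves an even path, and the only points needing a word of care are the trivial case $W_4 = K_4$ and the fact (already established in the introduction) that the removable edges of a wheel of even order are exactly its spokes.
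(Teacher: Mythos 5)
Your argument is correct and essentially identical to the paper's: both reduce to the fact that a $b$-invariant edge must be removable, hence a spoke, and that deleting the hub and one rim vertex leaves a path with an even number of vertices, which has a unique perfect matching. No further comment is needed.
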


\begin{proof}
Suppose that $e$ is a $b$-invariant edge of $W_n$. By the definition, $e$ is removable in $W_n$.
Recall that each spoke of $W_n$ is removable in $W_n$, while all the  other edges are nonremovable.
So $e$ is a spoke of $W_n$.
Note that the graph obtained from $W_n$ by deleting the two ends of $e$ is an odd path, and so it has a unique perfect matching.
Therefore,  $e$ is solitary in $W_n$. The result holds.
\end{proof}

\section{Proof of Theorem \ref{main-theorem}}\label{section 3}

In this section, unless otherwise specified, we use $N(X)$ and $E[X,Y]$  instead of $N_G(X)$ and $E_G[X,Y]$, respectively, where $X,Y\subseteq V(G)$. Next, we begin to prove Theorem \ref{main-theorem}.

Let $G$ be a solid brick of order $n$ other than $K_4$.
Then $G$ is 3-connected and $\delta(G)\geq3$. Moreover, $n\geq4$ and $n$ is even.
If $n=4$, then $G$ is $K_4$, a contradiction.
So  $n\geq6$.
We first prove the sufficiency.  Suppose that $G$ is a wheel $W_n$.
By Lemma \ref{Wn-b-solitary}, every $b$-invariant edge of $W_n$ is solitary.
The result holds.

Next, we prove the necessity.  Suppose that every $b$-invariant edge of $G$ is solitary.
We assert that $G$ has a vertex of degree at least four.
Otherwise, $G$ is cubic because $\delta(G)\geq3$.
By the definition of solid bricks, $G$ is different from $\overline{C_6}$ and the Petersen graph.
Since every $b$-invariant edge of $G$ is solitary, Theorem \ref{cubic-brick} implies that
$G$ is one graph of $\mathcal{G}$.
Recall that each graph of $\mathcal{G}$ is nonsolid. So $G$ is nonsolid, a contradiction.
The assertion holds.

Assume that $u$ is a vertex of $G$ with degree at least four.
By Lemma \ref{2-nonsolitary}, at most two edges incident with $u$ are nonsolitary in $G$. This implies that at least two edges incident with $u$ are solitary in $G$ since $d(u)\geq4$.
Assume, without loss of generality,  that  $uu_1$ and $uu_2$ are  two such solitary edges.
Then each of $uu_1$ and $uu_2$ lies in a unique perfect matching of $G$.
Thus, $G-\{u,u_i\}$ has a unique perfect matching, say $M_i$, $i=1,2$.

Now we consider the edge-induced subgraph $P=G[M_1\Delta M_2]$, where $M_1\Delta M_2$ is the symmetric difference of $M_1$ and $M_2$.
If $P$ contains a cycle $C_1$, then $C_1$ is an $M_1$-alternating cycle of $G$,
which implies that $M_1\Delta C_1$ is also a perfect matching of $G-\{u,u_1\}$, a contradiction.
Thus, $P$ has  no cycle. Since $u_1$ and $u_2$ are the only  vertices of degree 1 in $P$,   $P$   is a path with edges alternate between $M_1$ and $M_2$.
Moreover, $u_i$ is an end of $P$, which is not covered by $M_i$, $i=1,2$.
Hence, $P$ is even and $|E(P)|\geq2$.
For convenience, we assume that  $P=v_1v_2\ldots v_t$ with  $v_1=u_1$ and $v_t=u_2$. 
Then $v_1v_2\in M_2$, $v_{t-1}v_t\in M_1$, $t$ is odd and $t\geq3$.
Let $V_1=\{v_1,v_3,\ldots,v_t\}$ and $V_2=\{v_2,v_4,\ldots,v_{t-1}\}$.

Recall that $M_1$ is the unique perfect matching of $G-\{u,v_1\}$ and $M_2$ is the unique perfect matching of $G-\{u,v_t\}$. Then the following lemma  holds.

\begin{lemma}\label{lemma31}
$G-\{u,v_1\}$ has no $M_1$-alternating cycle and $G-\{u,v_t\}$ has no $M_2$-alternating cycle.
\end{lemma}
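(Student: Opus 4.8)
The plan is a direct argument by contradiction, exploiting that $M_1$ is the \emph{unique} perfect matching of $G-\{u,v_1\}$ and $M_2$ is the \emph{unique} perfect matching of $G-\{u,v_t\}$; this is exactly the symmetric-difference trick already used above to show that the graph $P$ is acyclic.

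First I would treat the claim for $M_1$. Suppose, for contradiction, that $C$ is an $M_1$-alternating cycle of $G-\{u,v_1\}$. Since the edges of $C$ alternate between $M_1$ and $E(G)\setminus M_1$ around a closed walk, $C$ has even length, and in particular $M_1\cap E(C)$ is a perfect matching of $C$ while $E(C)\setminus M_1$ is the complementary perfect matching of $C$. Consequently $M_1\triangle E(C)$ is again a perfect matching of $G-\{u,v_1\}$: it covers exactly the same vertices as $M_1$ because every vertex of $C$ is re-matched within $C$, and every vertex not on $C$ keeps its $M_1$-edge. Moreover $M_1\triangle E(C)\neq M_1$, since $M_1\cap E(C)$ is a proper nonempty subset of $E(C)$ and the two matchings differ precisely on $E(C)$. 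This contradicts the uniqueness of $M_1$ as a perfect matching of $G-\{u,v_1\}$. Hence $G-\{u,v_1\}$ has no $M_1$-alternating cycle.

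The statement for $M_2$ is obtained by the identical argument with $v_1$ replaced by $v_t$ and $M_1$ replaced by $M_2$, using that $M_2$ is the unique perfect matching of $G-\{u,v_t\}$. I do not foresee any genuine obstacle here: the only point that needs to be recorded carefully is that an $M$-alternating cycle is necessarily even, so that taking the symmetric difference with its edge set yields another perfect matching of the same vertex set rather than merely an alternating structure.
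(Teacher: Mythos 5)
Your argument is correct and is exactly the reasoning the paper intends: the paper gives no explicit proof, simply deriving the lemma from the uniqueness of $M_1$ and $M_2$ as perfect matchings of $G-\{u,v_1\}$ and $G-\{u,v_t\}$, and your symmetric-difference argument (including the observation that an alternating cycle is even) is the standard way to make that one-line deduction precise.
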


Let $H=G-(V(P)\cup\{u\})$ and $M_1'=M_1\cap E(H)$. Then $M_1'=M_1\backslash E(P)=M_2\backslash E(P)=M_1\cap M_2$ is the unique perfect matching of $H$.

\begin{lemma}\label{lemma32}
Let $v_j\in V_1$ and $v_k\in V_2$. Then $v_j$ and $v_k$ cannot be connected by an open $M_1'$-alternating path in $G$ unless the path is $v_jv_k$ that is an edge  in $E(P)$.
In particular, $E[V_1,V_2]\backslash E(P)=\emptyset$.
\end{lemma}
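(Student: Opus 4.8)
The plan is to prove the contrapositive of the "unless" clause: suppose there is an open $M_1'$-alternating path $Q$ in $G$ connecting some $v_j\in V_1$ to some $v_k\in V_2$, with $Q$ not equal to a single edge of $E(P)$; I will derive a contradiction with Lemma~\ref{lemma31}. Recall $M_1' = M_1\cap M_2$ is the restriction of both $M_1$ and $M_2$ to $H = G-(V(P)\cup\{u\})$, and that an open $M_1'$-alternating path neither begins nor ends with an edge of $M_1'$. The key structural fact I want to exploit is the parity/alternation of $P$: along $P = v_1v_2\cdots v_t$, the edge $v_{i}v_{i+1}$ is in $M_1$ precisely when $i$ is even (so $v_{t-1}v_t\in M_1$), and in $M_2$ precisely when $i$ is odd. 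So every $v_j\in V_1$ with $j\ge 3$ is matched \emph{by $M_1$} to $v_{j-1}\in V_2$ along $P$, while every $v_k\in V_2$ is matched \emph{by $M_2$} to $v_{k-1}\in V_1$ along $P$; the special endpoints are $v_1$ (unmatched by $M_1$) and $v_t$ (unmatched by $M_2$).

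The main step is to splice $Q$ together with an appropriate sub-path of $P$ to manufacture an $M_1$-alternating cycle in $G-\{u,v_1\}$ (or an $M_2$-alternating cycle in $G-\{u,v_t\}$), contradicting Lemma~\ref{lemma31}. Concretely: since $Q$ runs inside $H$, its internal vertices are disjoint from $V(P)$, and its two endpoints $v_j,v_k$ are the only vertices it shares with $P$. The path $Q$ starts with a non-$M_1'$ edge at $v_j$ and ends with a non-$M_1'$ edge at $v_k$; in $G$ these are non-$M_1$ edges and non-$M_2$ edges. Now walk from $v_j$ along $Q$ to $v_k$, then return from $v_k$ to $v_j$ along the sub-path of $P$ between them. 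If $j\ge 3$ and $k\ge 2$, at $v_j$ the edge of $P$ toward $v_{j-1}$ lies in $M_1$ and the edge of $Q$ out of $v_j$ is not in $M_1$; a routine check of the four endpoint-parities shows that for a suitable choice — using the $M_1$-alternating structure when both endpoints are "interior" to $P$ in the right way, and using the $M_2$-structure otherwise — the closed walk $Q$ followed by the $P$-segment is an alternating cycle with respect to $M_1$ (respectively $M_2$), and it avoids both $u$ and the relevant unmatched endpoint. Finally, the degenerate possibilities must be handled: if $v_j$ and $v_k$ are consecutive on $P$ the "return segment" is a single $P$-edge and one must check this edge is not in the same matching as the incident $Q$-edges — this forces $Q=v_jv_k\in E(P)$, which is exactly the excluded case; and if one endpoint is $v_1$ or $v_t$ one uses the matching ($M_2$ or $M_1$ respectively) under which that vertex is covered inside $H$, so the alternation still closes up.

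The expected main obstacle is the case bookkeeping at the two endpoints of the spliced cycle: there are a few parity cases ($j$ odd versus the endpoint being $v_1$; $k$ even; whether $v_j,v_k$ are adjacent on $P$; which of $M_1,M_2$ to alternate against), and in each one must verify both that the concatenation is genuinely alternating \emph{at the splice vertices} $v_j,v_k$ and that the resulting cycle lives in $G-\{u,v_1\}$ or $G-\{u,v_t\}$ as appropriate. Once the alternating cycle is produced, Lemma~\ref{lemma31} gives the contradiction immediately. For the "in particular" statement: an edge of $E[V_1,V_2]$ not in $E(P)$ is itself an open $M_1'$-alternating path of length one joining a vertex of $V_1$ to a vertex of $V_2$ that is not a $P$-edge, so the first part of the lemma rules it out, giving $E[V_1,V_2]\setminus E(P)=\emptyset$.
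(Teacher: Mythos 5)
Your proposal is correct and follows essentially the same route as the paper: splice the open $M_1'$-alternating path with the segment of $P$ between its two endpoints to form a cycle, and derive a contradiction with Lemma~\ref{lemma31}. The ``routine check of endpoint-parities'' you defer is exactly the paper's one-line case split: if $j<k$ the spliced cycle is an $M_2$-alternating cycle of $G-\{u,v_t\}$, and if $j>k$ it is an $M_1$-alternating cycle of $G-\{u,v_1\}$.
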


\begin{proof}
Suppose to the contrary that $v_j$ and $v_k$ is connected by an open $M_1'$-alternating path $P'$ in $G$ and  $P'$ is not an edge  in $E(P)$.
Then  $P'$ is odd.
If $|E(P')|=1$, then $E(P')\cap E(P)=\emptyset$.
If $|E(P')|\neq1$, then $|E(P')|\geq3$ and  all the internal vertices of $P'$ are covered by $M_1'$. 
Hence, all the internal vertices of $P'$ lie in the graph $H$, which implies that $E(P')\cap E(P)=\emptyset$.
In short, whether $|E(P')|=1$ or not, we have $E(P')\cap E(P)=\emptyset$.

Since $v_j\in V_1$ and $v_k\in V_2$, we have $j\neq k$.
Let $C_1=v_jPv_kP'v_j$.
If $j<k$, then $C_1$ is an $M_2$-alternating cycle of $G-\{u,v_t\}$;
otherwise, $C_1$ is an $M_1$-alternating cycle of $G-\{u,v_1\}$.
In both cases, we  get a contradiction to Lemma \ref{lemma31}.
Hence, $v_j$ and $v_k$ cannot be connected by an open $M_1'$-alternating path in $G$
unless the path is  $v_jv_k$ that is an edge in  $E(P)$.
In particular, if $j$ and $k$ are not consecutive positive integers, then $v_jv_k\notin E(G)$.
This implies that  $E[V_1,V_2]\backslash E(P)=\emptyset$.
The result holds.
\end{proof}

Recall that each vertex of  $H$  is covered by an edge in $M_1'$. Thus, for any vertex $x_1$ in $H$,  there exists an $M_1$-alternating path in $H$ which starts at $x_1$ and starts  with an edge in $M_1$.
In the following lemma, we present a property with respect to such  maximal $M_1$-alternating paths.

\begin{lemma}\label{lemma33}
For any vertex $x$ in $P$, assume that $x_1$ is a neighbour of $x$ in $H$ and $P'=x_1x_2\cdots x_l$ is
a maximal $M_1'$-alternating path in $H$ starting at $x_1$  such that $x_1x_2\in M_1'$.
Then $l$ is even and $N(x_l)\cap V(P')\subseteq \{x_2,x_4,\ldots,x_{l-2},x_{l-1}\}$.
\end{lemma}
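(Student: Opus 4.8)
The plan is to combine two facts: $M_1'$ is the perfect matching of $H$, and, by Lemma~\ref{lemma31}, $G-\{u,v_1\}$ has no $M_1$-alternating cycle. Since $x_1\in V(H)$ and $M_1'$ is perfect on $H$, the first edge $x_1x_2$ of $P'$ lies in $M_1'$, so running along $P'$ the edges in $M_1'$ are exactly those of the form $x_{2j-1}x_{2j}$. First I would show $l$ is even. If $l$ were odd, then $x_l$ would be the unique vertex of $P'$ not covered by an $M_1'$-edge of $P'$, so its $M_1'$-partner $y$ (which exists, $M_1'$ being a perfect matching of $H$) would lie in $V(H)\setminus V(P')$; but for $l$ odd the last edge $x_{l-1}x_l$ of $P'$ is not in $M_1'$, so appending $x_ly$ to $P'$ would give a strictly longer $M_1'$-alternating path in $H$ starting at $x_1$, contradicting maximality. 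Hence $l$ is even, the $M_1'$-edges of $P'$ are precisely $x_1x_2,x_3x_4,\dots,x_{l-1}x_l$, and in particular $x_{l-1}\in N(x_l)$.

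Next I would bound the remaining neighbours of $x_l$ on $P'$. Take any $x_i\in N(x_l)\cap V(P')$; since $x_l\notin N(x_l)$ and $l$ is even, it suffices to rule out odd $i$ with $i\le l-3$. Suppose $x_lx_i\in E(G)$ with such an $i$, and consider the cycle $C=x_ix_{i+1}\cdots x_lx_i$. Along the subpath $x_i,\dots,x_l$ the $M_1'$-edges are $x_ix_{i+1},x_{i+2}x_{i+3},\dots,x_{l-1}x_l$ (because $i$ and $l-1$ are odd), while the chord $x_lx_i$ is not in $M_1'$ because $x_l$ and $x_i$ are already matched by $M_1'$ to $x_{l-1}$ and $x_{i+1}$; as $C$ has the even length $l-i+1$, its edges then alternate between $M_1'$ and $E(G)\setminus M_1'$ all the way around, so $C$ is an $M_1'$-alternating cycle lying in $H$. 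Because $M_1'=M_1\cap E(H)$, the cycle $C$ is an $M_1$-alternating cycle contained in $H\subseteq G-\{u,v_1\}$, contradicting Lemma~\ref{lemma31}. Hence no such $i$ exists and $N(x_l)\cap V(P')\subseteq\{x_2,x_4,\dots,x_{l-2},x_{l-1}\}$.

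The only point needing care is the parity bookkeeping that makes $C$ genuinely alternating: one has to verify that $l-i+1$ is even and that the $M_1'$-edges together with the chord line up correctly, which is exactly why only odd indices $i\le l-3$ are excluded (for even $i$ the analogous cycle has odd length and no contradiction arises, consistent with even-indexed neighbours being allowed). Together with the observation that $C$, being contained in $H$, avoids both $u$ and $v_1$ so that Lemma~\ref{lemma31} applies, this is the main --- and fairly minor --- obstacle; everything else is immediate from the uniqueness of $M_1$ encoded in Lemma~\ref{lemma31}.
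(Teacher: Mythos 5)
Your proof is correct and follows essentially the same route as the paper: maximality of $P'$ together with $M_1'$ being a perfect matching of $H$ forces $x_{l-1}x_l\in M_1'$ (hence $l$ even), and a chord $x_ix_l$ with $i$ odd, $i\le l-3$, would close an $M_1$-alternating cycle inside $H\subseteq G-\{u,v_1\}$, contradicting Lemma~\ref{lemma31}. Your write-up just spells out the parity bookkeeping that the paper leaves implicit.
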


\begin{proof}
Recall that $M_1'$ is the unique perfect matching of $H$.
Since $P'$ is a maximal $M_1'$-alternating path in $H$ starting at $x_1$ such that $x_1x_2\in M_1'$,
we have $l\geq2$ and $x_{l-1}x_l\in M_1'$. So  $l$ is even.
Assume that $x_l$ has a neighbour $x_j$ in $P'$.
If $j\in\{1,3,\ldots,l-3\}$, then $x_jP'x_lx_j$ is an $M_1$-alternating cycle of $G-\{u,v_1\}$,  contradicting Lemma \ref{lemma31}.
Therefore, $j\in \{2,4,\ldots,l-2,l-1\}$. The result holds.
\end{proof}

\begin{figure}[h]
 \centering
 \includegraphics[width=\textwidth]{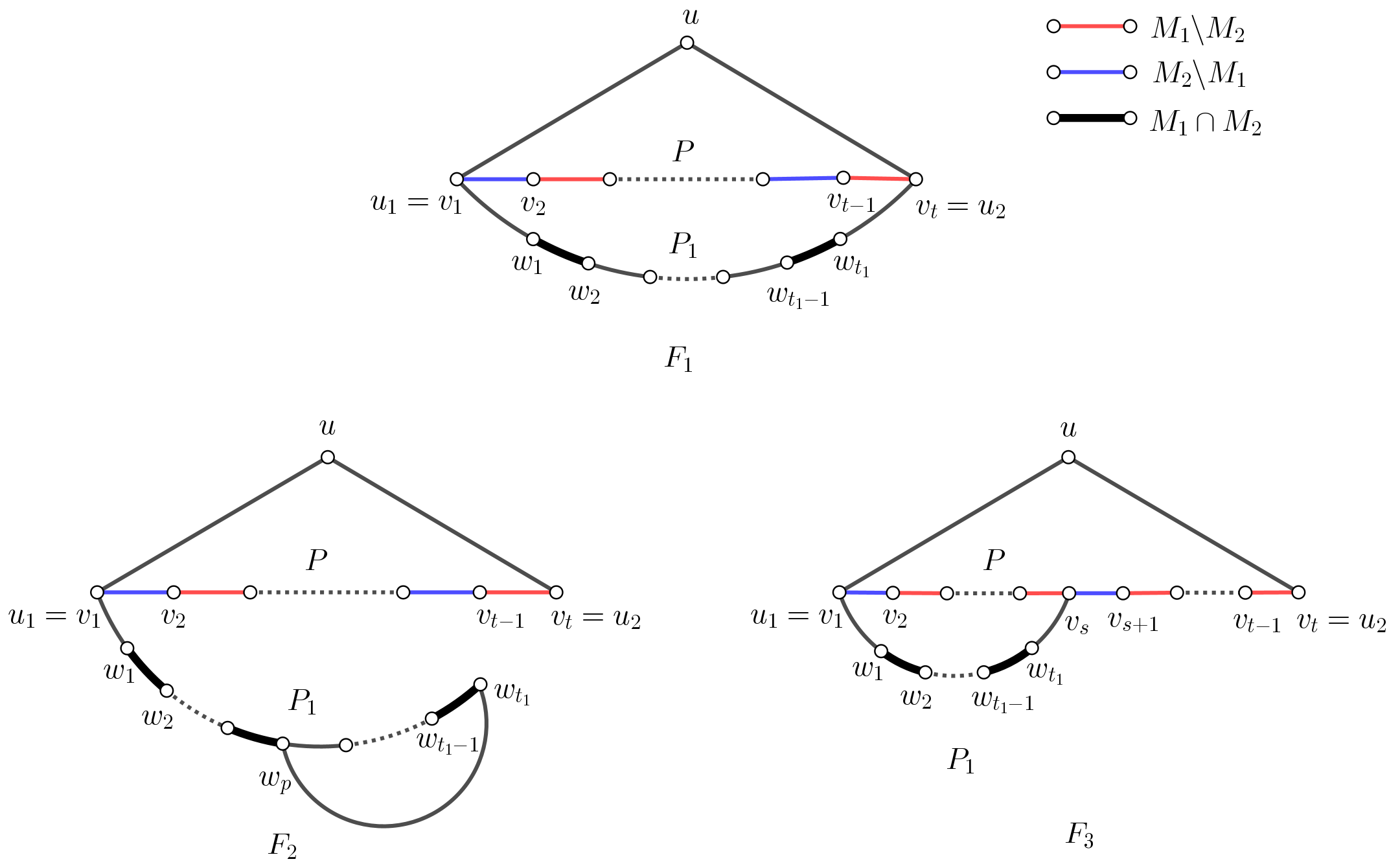}\\
 \caption{The three graphs.}\label{fig3}
\end{figure}

Note that an $M_1'$-alternating path  is also an $M_1$-alternating path.
Recall that $\delta(G)\geq3$. Then the vertex $v_1$ must have a neighbour in $H$ or $P-\{v_1,v_2\}$.
If $N(v_1)\cap V(H)=\emptyset$, then Lemma \ref{lemma32} implies that $\emptyset\neq (N(v_1)\cap V(P))\backslash\{v_2\}\subseteq V_1$.
If $v_1$ has a neighbour $w_1$ in $H$,  let $P_1=w_1w_2\cdots w_{t_1}$ be a maximal $M_1$-alternating path in $H$ starting at $w_1$  such that $w_1w_2\in M_1$.
Then $t_1\geq2$, $t_1$ is even and $w_{t_1-1}w_{t_1}\in M_1$.
Moreover, $M_1\cap E(P_1)$ is the unique perfect matching of $P_1$.
Let $F_1$, $F_2$ and $F_3$ be the three graphs as shown in Figure \ref{fig3},
where $u$, $P$ and $P_1$ are  defined as previously.
In the graph $F_2$, for the vertex $w_p$, $p$ is even and $2\leq p\leq t_1-2$;
in the graph $F_3$, for the vertex $v_s$, $s$ is odd and $1\leq s\leq t-2$.
Note that   in the graph $F_1$ it is possible that the path $v_1w_1w_2\cdots w_{t_1}v_t$ is an edge $v_1v_t$, and  in the graph $F_3$ it is possible that the path $v_1w_1w_2\cdots w_{t_1}v_s$ is an edge $v_1v_s$.
We can see that $F_1$, $F_2$ and $F_3$ are possible  subgraphs of $G$. The following lemma shows that  only $F_1$  can occur as a subgraph of $G$.

Let $H_1=G-(V(P)\cup V(P_1)\cup\{u\})$. Then $H_1=H-V(P_1)$.
Since  $M_1' ~(=M_1\cap E(H))$ and $M_1\cap E(P_1)$ are the unique perfect matchings of $H$ and $P_1$, respectively, $M_1\cap E(H_1)$ is the unique perfect matching of $H_1$. In Lemma \ref{lemma33}, if we replace $H$ by $H_1$, then the assertion still holds.


\begin{lemma}\label{3-subgraph}
The following three assertions hold.
\vspace{-8pt}
\begin{enumerate}[(\romannumeral1)]
\setlength{\itemsep}{-1ex}
\item  If $F_1\subseteq G$, then for each vertex $v_k$ in $V_2$, $N(v_k)=\{v_{k-1},v_{k+1},u\}$.
\item  If $F_2\subseteq G$, then $N(v_2)=\{v_1,v_3,u\}$.
\item  If $F_3\subseteq G$, then $N(v_{s+1})=\{v_s,v_{s+2},u\}$.
\end{enumerate}
\vspace{-8pt}
Furthermore, $F_1\subseteq G$, $F_2\varsubsetneq G$, $F_3\varsubsetneq G$, $N(w_{t_1})=\{w_{t_1-1},v_t,u\}$, and $N(w_1)=\{w_2,v_1,u\}$.
\end{lemma}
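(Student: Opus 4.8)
The plan is to argue that each of the three configurations $F_1,F_2,F_3$ forces a tight local structure, and then to rule out $F_2$ and $F_3$ entirely, leaving $F_1$ as the only possibility. The common mechanism is the following: whenever we have a vertex $y$ that lies on no long $M_1'$- (or $M_1$-) alternating path and whose every neighbour is already "pinned down", a would-be extra neighbour of $y$ would create an $M_1$- or $M_2$-alternating cycle in $G-\{u,v_1\}$ or $G-\{u,v_t\}$, contradicting Lemma \ref{lemma31} (via the cycle-building device used in Lemmas \ref{lemma32} and \ref{lemma33}). So the skeleton of the proof is: (a) establish the three "if $F_i\subseteq G$ then $N(\cdot)=\{\cdot,\cdot,u\}$" statements by this alternating-cycle argument; (b) use the degree-$3$ conclusions together with the fact that $\delta(G)\ge 3$ and $G$ is a brick to locate contradictions for $F_2$ and $F_3$; (c) conclude $F_1\subseteq G$ and read off $N(w_1)$ and $N(w_{t_1})$ from part (a).

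\textbf{Step 1: the three "degree three" claims.} For (i), assume $F_1\subseteq G$, so $v_1$ is joined to $w_1$, $w_{t_1}$ is joined to $v_t$, and $P_1=w_1\cdots w_{t_1}$ is a maximal $M_1$-alternating path in $H$ with $w_1w_2,\,w_{t_1-1}w_{t_1}\in M_1$. Recall $M_1$ restricted to $P$ is forced and $E[V_1,V_2]\setminus E(P)=\emptyset$ by Lemma \ref{lemma32}. Fix $v_k\in V_2$. It certainly has neighbours $v_{k-1},v_{k+1},u$. Suppose it has another neighbour $z$. By Lemma \ref{lemma32}, $z\notin V_1$; by construction $z\notin V_2\setminus\{v_{k-1\mp1}\}$ essentially because $V_2$ vertices are only adjacent along $P$ (again via Lemma \ref{lemma31}); so $z\in V(H)$. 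Now start the $M_1'$-alternating path out of $v_k$ into $H$ through $z$'s mate and run the analogue of Lemma \ref{lemma33}: the maximal such path, together with the sub-path of $P$ from $v_k$ back to $v_1$ (using that $v_kv_{k-1}\in M_2$, the $M_2$-side) or to $v_t$, and the edge from $v_1$ into $w_1$, closes up into an $M_1$- or $M_2$-alternating cycle avoiding $\{u,v_1\}$ or $\{u,v_t\}$, contradicting Lemma \ref{lemma31}. Hence $N(v_k)=\{v_{k-1},v_{k+1},u\}$. Parts (ii) and (iii) are the same argument applied to $v_2$ in $F_2$ and to $v_{s+1}$ in $F_3$, where the presence of the extra branch $w_p$ (resp. $v_s$) off $P_1$ provides the return route needed to close the alternating cycle.

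\textbf{Step 2: eliminating $F_2$ and $F_3$, and finishing.} Suppose $F_2\subseteq G$. Then $w_p$ ($p$ even, $2\le p\le t_1-2$) is adjacent to $v_1$. Consider the alternating path that goes $v_1 \to w_1 \to \cdots \to w_p \to v_1$: since $w_{p-1}w_p\in M_1$ and the edges $v_1w_1$, $w_pv_1$ are non-matching, $v_1 w_1 w_2 \cdots w_p v_1$ is an $M_1$-alternating closed walk; more carefully, $w_p P_1 w_1 v_1 w_p$ is an $M_1$-alternating cycle, and it avoids $\{u, v_t\}$, so it is an $M_2$-alternating cycle of $G-\{u,v_t\}$ (as $M_1$ and $M_2$ agree on $H$ and on $v_1w_1$), contradicting Lemma \ref{lemma31}. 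For $F_3$: here $v_1$ sees $w_1$, and $w_{t_1}$ sees $v_s$ with $s$ odd, $1\le s\le t-2$, i.e. $v_s\in V_1$. The path $v_1 w_1\cdots w_{t_1} v_s$ followed by the $M_1$-alternating sub-path of $P$ from $v_s$ to $v_1$ (legitimate since $v_s\in V_1$ and $v_tv_{t-1}\in M_1$, and the $P$-segment from $v_1$ to $v_s$ is $M_1$-alternating starting and ending with $M_2$- and matching-parity as needed) forms an $M_1$-alternating cycle of $G-\{u,v_t\}$ avoiding $u$ and $v_t$; again this contradicts Lemma \ref{lemma31}. Hence $F_2\varsubsetneq G$ and $F_3\varsubsetneq G$. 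Since $v_1$ has degree $\ge 3$ and (by Lemma \ref{lemma32} and the forced structure of $P$) its only possible neighbours outside $\{v_2,u\}$ either lie in $V_1$ — giving $F_3$ after routing through $H$ — or lie in $H$, the absence of $F_2,F_3$ forces $v_1$ to have a neighbour $w_1\in H$ and the maximal path $P_1$ from it to terminate at a vertex $w_{t_1}$ adjacent to $v_t$ (any other termination would reproduce $F_2$ or $F_3$). That is exactly $F_1\subseteq G$. Finally, apply part (i)'s argument symmetrically to $w_1$ and $w_{t_1}$: $w_1$ already has neighbours $w_2,v_1,u$ and any further neighbour would, via Lemma \ref{lemma33} inside $H_1$ plus the return through $P$, build a forbidden alternating cycle, so $N(w_1)=\{w_2,v_1,u\}$, and likewise $N(w_{t_1})=\{w_{t_1-1},v_t,u\}$.

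\textbf{Main obstacle.} The delicate point is Step 2, pinning down that $F_1$ is \emph{forced} rather than merely \emph{possible}: one must show that starting from $\delta(G)\ge 3$ at $v_1$, every case for where $v_1$'s third neighbour goes, and for how the maximal alternating path $P_1$ out of $H$ terminates, collapses to the $F_1$ picture once $F_2$ and $F_3$ are excluded. This requires carefully tracking matching-parities along $P$ (which side of $M_1\triangle M_2$ each segment uses) so that the concatenated path is genuinely alternating and genuinely closes into a cycle avoiding the right pair $\{u,v_1\}$ or $\{u,v_t\}$ — the bookkeeping of which forbidden cycle one lands in is where errors are easiest to make.
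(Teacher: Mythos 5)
Your skeleton matches the paper's (establish (i)--(iii) first, then exclude $F_2$ and $F_3$, then deduce $F_1\subseteq G$ and the neighbourhoods of $w_1,w_{t_1}$), but the execution has a genuine gap: you never invoke the solidity of $G$, and that hypothesis is exactly what carries the hard cases. Lemma \ref{lemma31} only forbids \emph{alternating} cycles, which are necessarily even. The configurations you must rule out are mostly \emph{odd} cycles: an open $M_1'$-alternating path between two vertices of $V_2$ closes into an odd cycle through $P$; the cycle $w_pP_1w_{t_1}w_p$ in $F_2$ is odd; and the cycle $v_1Pv_sw_{t_1}P_1w_1v_1$ in $F_3$ is odd. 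None of these contradicts Lemma \ref{lemma31}. The paper kills them by exhibiting a \emph{second}, vertex-disjoint odd cycle (e.g.\ $uv_{t-1}v_tu$ or $uv^*Pv_tu$, which exists only after (i)--(iii) have supplied the spoke $uv^*$) such that deleting both leaves a graph with a perfect matching, contradicting solidity. Your Step 2 is the clearest casualty: you assert that $v_1w_1P_1w_{t_1}v_sPv_1$ is an ``$M_1$-alternating cycle of $G-\{u,v_t\}$,'' but this cycle is odd, so it cannot be alternating, and indeed it is precisely the subgraph $F_3$ itself --- if Lemma \ref{lemma31} excluded it there would be nothing to prove. The same problem infects your Step 1 treatment of a putative extra neighbour $z$ of $v_k\in V_2$: when the maximal alternating path out of $z$ returns to a vertex of $V_2$ or to an even-indexed $w_j$, the resulting cycle is odd and must be paired with a disjoint odd cycle via solidity, not fed to Lemma \ref{lemma31}.

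Two smaller points. First, you misread $F_2$: the extra edge there is $w_pw_{t_1}$ (a chord of $P_1$), not $w_pv_1$, so the cycle you build to exclude $F_2$ uses an edge that the configuration does not provide. Second, the endgame of showing $v^*$ has no neighbour in $H_1$ also needs solidity (the endpoint $a_l$ of the maximal path may have an even-indexed neighbour on that path, giving an odd cycle that Lemma \ref{lemma33} does not exclude), plus the paper's final degree count $N(a_l)\subseteq\{a_{l-1},u\}$ against $\delta(G)\ge 3$; your write-up waves at this via ``the analogue of Lemma \ref{lemma33}'' without the solidity step. As written, the proof does not go through.
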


\begin{proof}
Let
\begin{equation}
C_1=\begin{cases}
uv_1w_1P_1w_{t_1}v_tu, & if\ F_1\subseteq G\\
w_pP_1w_{t_1}w_p, & if\ F_2\subseteq G\\
v_1Pv_sw_{t_1}P_1w_1v_1, & if\ F_3\subseteq G\\
\end{cases}
\nonumber
\end{equation}
and
\begin{equation}
v^*=\begin{cases}
v_k, & if\ F_1\subseteq G\\
v_2, & if\ F_2\subseteq G\\
v_{s+1}, & if\ F_3\subseteq G,\\
\end{cases}
\nonumber
\end{equation}
where $v_k$ is  a vertex  in $V_2$. Then $C_1$ is an odd cycle in $G$ and $v^*\in V_2$.

\begin{claim}\label{claim31}
No vertex of  $P$ is  reachable from the vertex $v^*$ by an open $M_1'\backslash E(P_1)$-alternating path in $G$ unless the path is an edge in $E(P)$ with one end $v^*$.
\end{claim}

Suppose, to the contrary, that there exists a vertex $v_q$ in $P$  which is reachable from $v^*$ by an open $M_1'\backslash E(P_1)$-alternating path $Q$ in $G$ and $Q$ is not an  edge $v_qv^*$ in $E(P)$. 
Then $v_qv^*\notin E(P)\cup E(P_1)$ when $|E(Q)|=1$.
Further, by the definition, all internal vertices of the path $Q$ lie in the graph $H_1$.
It follows that whether $|E(Q)|=1$ or not, we have $E(Q)\cap (E(P)\cup E(P_1))=\emptyset$.
Note that $Q$ is also an open $M_1'$-alternating path.
By Lemma \ref{lemma32}, we have $v_q\in V_2$ since $v^*\in V_2$.
It is possible  that $v_q=v^*$.
Let $C_2=v^*Qv^*$ if $v_q=v^*$, and  $C_2=v^*Pv_qQv^*$ otherwise.
Then $C_2$ is an odd cycle in $G$.
 For the case  $F_3\subseteq G$ and $q<s$, $v_1Pv_qQv^*v_sw_{t_1}P_1w_1v_1$ is an $M_2$-alternating cycle of $G-\{u,v_t\}$, contradicting Lemma \ref{lemma31}.
For all the remaining cases, one can obtain that $C_1$ and $C_2$ are two vertex disjoint odd cycles of $G$ such that $G-(V(C_1)\cup V(C_2))$ has a perfect matching, contradicting the hypothesis that $G$ is solid.
The claim holds.

\begin{claim}\label{claim32}
No vertex of  $P_1$ is reachable from the vertex $v^*$ by an open $M_1'\backslash E(P_1)$-alternating path in $G$.
\end{claim}

Suppose, to the contrary, that there exists a vertex $w_j$ in $P_1$ such that $w_j$ and $v^*$ are connected by an open $M_1'\backslash (E(P_1)$-alternating path $Q'$ in $G$, where $1\leq j\leq t_1$.
Then all internal vertices of the path $Q'$ lie in the graph $H_1$.
If $j$ is even, then no matter which of $F_1$, $F_2$ and $F_3$ is a subgraph of  $G$, we see  that
$v_1Pv^*Q'w_jP_1w_1v_1$ is an $M_2$-alternating cycle of $G-\{u,v_t\}$. This contradicts Lemma \ref{lemma31}.
Now we consider the case where $j$ is odd.
If $F_1\subseteq G$, then $v^*Pv_tw_{t_1}P_1w_jQ' v^*$ is an $M_1$-alternating cycle of $G-\{u,v_1\}$;
 if $F_3\subseteq G$, then $v_sv^*Q'w_jP_1w_{t_1}v_s$ is an $M_2$-alternating cycle of $G-\{u,v_t\}$. In both cases, we get a  contradiction to Lemma \ref{lemma31}.
So we may assume that $F_2\subseteq G$.
Recall that $j$ is odd, and both $p$ and $t_1$ are even. Then $j\neq p$ and $j\neq t_1$.
If $p<j<t_1$, then $v_1Pv^*Q'w_jP_1w_{t_1}w_pP_1w_1v_1$ is an $M_2$-alternating cycle of $G-\{u,v_t\}$, a contradiction to Lemma \ref{lemma31}.
If $1\leq j<p$, then let $C_3=v_1Pv^*Q'w_jP_1w_1v_1$.
Note that  $C_1$ and $C_3$ are two vertex disjoint odd cycles of $G$ such that $G-(V(C_1)\cup V(C_3))$ has a perfect matching, contradicting the hypothesis that $G$ is solid.
The claim holds.

\vspace{0.25cm}

Next, we assert that $v^*$  has no neighbour in $H_1$.
Recall that $H_1=G-(V(P)\cup V(P_1)\cup\{u\})$.
Assume, to the contrary, that $v^*$  has a neighbour $a_1$ in $H_1$.
Let $P'=a_1a_2\cdots a_l$ be a maximal $M_1$-alternating path  in $H_1$ starting at $a_1$ such that $a_1a_2\in M_1$.
Then $a_{l-1}a_l\in M_1$ and the vertex $a_l$ has no neighbour in $H_1-V(P')$ by the choice of $P'$.
By Claim \ref{claim31} and \ref{claim32}, the vertex $a_l$ has no neighbour in $P\cup P_1$.
Therefore, $P'$ is a maximal $M_1$-alternating path in $H$ starting at $a_1$ with $a_1a_2\in M_1$.
Now we consider the case that the vertex $a_l$ has a neighbour $a_{l'}$ in $P'$, where $1\leq l'\leq l-2$.
By Lemma \ref{lemma33}, $l'$ is even.
Let $C_4=a_{l'}P'a_la_{l'}$.
Then $C_1$ and $C_4$ are two vertex disjoint odd cycles of $G$ such that $G-(V(C_1)\cup V(C_4))$ has a perfect matching, a contradiction.
It follows that the vertex $a_l$ has no neighbour  in $P'$ besides the vertex $a_{l-1}$.
Consequently, $N(a_l)=\{a_{l-1}\}$ or $N(a_l)=\{a_{l-1},u\}$.
We get a contradiction to the fact that $\delta(G)\geq3$.
This implies that $v^*$  has no neighbour in $H_1$ and so the assertion holds.

Again, by Claims \ref{claim31} and \ref{claim32}, the vertex $v^*$  has no neighbour in $P\cup P_1$ besides its two neighbours in $P$.
This implies that the vertex $v^*$  has precisely one additional neighbor $u$ in $G$ besides the  two  in $P$, as required by the above assertion and  $\delta(G)\geq3$.
Thus, (i)-(iii) hold.

To complete the proof of Lemma \ref{3-subgraph}, we proceed to show that $F_1\subseteq G$.
If $F_2\subseteq G$ or  $F_3\subseteq G$, let $C_5=uv^*Pv_tu$.
Then $C_1$ and $C_5$ are two vertex disjoint odd cycles of $G$ such that $G-(V(C_1)\cup V(C_5))$ has a perfect matching, a contradiction.
Thus, we have $F_2\varsubsetneq G$ and $F_3\varsubsetneq G$.

Since $\delta(G)\geq3$,  $v_1$ has at least one neighbour in $H$ or $P-\{v_1,v_2\}$.
If $v_1$ has a neighbour $z$ in $P-\{v_1,v_2\}$, then, by Lemma \ref{lemma32}, we have $z\in V_1$.
This implies that $z=v_t$ because $F_3\varsubsetneq G$. Hence, $F_1\subseteq G$.
If $v_1$ has a neighbour $w_1$ in $H$, let  $P_1=w_1w_2\cdots w_{t_1}$ be defined as above (after the proof of Lemma \ref{lemma33}),
then $w_{t_1}$ has no neighbour in $H_1$ by the choice of $P_1$.
By Lemma \ref{lemma33}, we have $N(w_{t_1})\cap V(P_1)\subseteq \{w_2,w_4,\ldots,w_{t_1-2},w_{t_1-1}\}$.
It follows that $N(w_{t_1})\cap V(P_1)=\{w_{t_1-1}\}$ because $F_2\varsubsetneq G$.
Since $\delta(G)\geq3$, $w_{t_1}$  has a neighbour $z'$ in $P$.
By Lemma \ref{lemma32}, we have $z'\in V_1$.
This implies that $z'=v_t$ because $F_3\varsubsetneq G$.
Once more invoking the minimum degree condition  $\delta(G)\geq3$, we have $N(w_{t_1})=\{w_{t_1-1},v_t,u\}$, which implies that  $F_1\subseteq G$.
By symmetry, it follows that  $N(w_1)=\{w_2,v_1,u\}$.
The result holds.
\end{proof}

By Lemma \ref{3-subgraph}, we have $F_1\subseteq G$ and $N(v_k)=\{v_{k-1},v_{k+1},u\}$ for any vertex $v_k$ in $V_2$.
If $V(P_1)\neq \emptyset$, then  $N(w_{t_1})=\{w_{t_1-1},v_t,u\}$ and let $C=v_1Pv_tw_{t_1}P_1w_1v_1$;
otherwise, $v_1v_t\in E(G)$ and let $C=v_1Pv_tv_1$.
Then $C$ is an odd cycle of $G$.
In the rest of this paper, we show that $u$ is a neighbour of  every vertex  in $V_1\cup V(P_1)$.

\begin{lemma}\label{v1-p1}
For any vertex $y\in V_1\cup V(P_1)$, if no vertex of  $C$ is reachable from $y$ by an open $M_1'\backslash  E(P_1)$-alternating path in $G$ unless the path is an edge  in $E(C)$ incident with  $y$,
then $d(y)=3$ and  besides  its two neighbours in $C$, $y$ has exactly one more neighbour $u$.
\end{lemma}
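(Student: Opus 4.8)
The plan is to exploit the hypothesis exactly the way the earlier lemmas were used: fix a vertex $y\in V_1\cup V(P_1)$, build a maximal alternating path into $H_1$ out of $y$, and use solidity together with Lemma~\ref{lemma31} to strangle all possible neighbours of its far endpoint, forcing a degree-$3$ conclusion. Concretely, suppose first that $y$ has a neighbour $a_1$ in $H_1=G-(V(P)\cup V(P_1)\cup\{u\})$. Since $M_1\cap E(H_1)$ is the unique perfect matching of $H_1$, the vertex $a_1$ is $M_1$-covered, so we may take a maximal $M_1$-alternating path $P'=a_1a_2\cdots a_l$ in $H_1$ with $a_1a_2\in M_1$; as in Lemma~\ref{lemma33}, $l$ is even and $a_{l-1}a_l\in M_1$, and the replacement remark after Lemma~\ref{lemma33} applies. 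By maximality $a_l$ has no neighbour in $H_1-V(P')$. The hypothesis of the present lemma says no vertex of $C$ is reachable from $y$ by an open $M_1'\backslash E(P_1)$-alternating path except an edge of $C$ at $y$; since $V(C)=V(P)\cup V(P_1)$, the path $yP'$ together with any edge from $a_l$ to $P\cup P_1$ would be such an open alternating path of length $\geq 3$ to a vertex of $C$ (not an edge at $y$), a contradiction — so $a_l$ has no neighbour in $P\cup P_1$ either. Hence $a_l$'s only possible neighbours are in $V(P')$ and $u$.

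Next I would kill internal chords at $a_l$. If $a_l$ has a neighbour $a_{l'}$ with $1\le l'\le l-2$, then by the ($H_1$-version of) Lemma~\ref{lemma33} the index $l'$ is even, so $C_4=a_{l'}P'a_la_{l'}$ is an odd cycle. The point now is that $C$ (the odd cycle $v_1Pv_tw_{t_1}P_1w_1v_1$, or $v_1Pv_tv_1$ when $P_1$ is empty) is vertex-disjoint from $C_4$, and $G-(V(C)\cup V(C_4))$ has a perfect matching: indeed $M_1$ restricted to the complement of $V(C)$ is a perfect matching of that complement (all of $V(C)$ is $M_1$-covered by edges of $C$, since $M_1\cap E(P)$ matches $v_2,\dots,v_{t-1}$ and $v_t$, $M_1\cap E(P_1)$ matches $P_1$, and $v_1$ is matched by $v_1w_1$ or $v_1v_t$), and removing the further odd cycle $C_4$ (which is $M_1$-alternating, hence $M_1$ matches $V(C_4)$ internally except one vertex) still leaves a perfect matching after deleting $V(C_4)$; more carefully, one checks $M_1\Delta C_4$ restricted to $V(G)\setminus(V(C)\cup V(C_4))$ works. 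This contradicts solidity. Therefore $a_l$ has no chord, so $N(a_l)\subseteq\{a_{l-1},u\}$, forcing $d(a_l)\le 2$ against $\delta(G)\ge 3$. Hence $y$ has no neighbour in $H_1$.

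With $y$ having no neighbour in $H_1$, every neighbour of $y$ lies in $V(P)\cup V(P_1)\cup\{u\}=V(C)\cup\{u\}$. By the reachability hypothesis, the only neighbours of $y$ in $V(C)$ are its two neighbours along $C$ (any other neighbour would give a one-edge open $M_1'\backslash E(P_1)$-alternating path to a vertex of $C$ that is not an edge of $C$ at $y$; here one must note that such a chord edge is not in $M_1'$ and not in $E(P_1)$, which is automatic since $M_1$-edges inside $C$ lie along $C$ and edges of $P_1$ lie along $C$). Since $\delta(G)\ge 3$, $y$ must then have at least one further neighbour, which can only be $u$, and it has exactly one further neighbour because it already has two on $C$ and none elsewhere; thus $d(y)=3$ and the extra neighbour is $u$, as claimed.

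The main obstacle I expect is the bookkeeping in the disjointness/perfect-matching verification for the solidity contradiction: one has to be careful that $C$ really is $M_1$-alternating in the strong sense that $M_1$ covers $V(C)$ using only edges of $C$ (so that deleting $V(C)$ leaves $M_1$ minus those edges as a perfect matching of the rest), and simultaneously that $C_4\subseteq H_1$ is disjoint from $V(C)$ and from the part of $P'$ outside $C_4$ in a way compatible with $M_1$; the cleanest route is to argue directly with $M':=(M_1\Delta E(C_4))\cap E\big(G-(V(C)\cup V(C_4))\big)$ and check it is a perfect matching there, which reduces to the two facts that $M_1$ leaves $V(C)$ and $V(C_4)$ matched internally. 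Everything else is a routine repetition of the Lemma~\ref{lemma32}–\ref{lemma33} machinery.
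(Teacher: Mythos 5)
Your overall architecture matches the paper's proof almost step for step (maximal alternating path $P'$ out of a putative neighbour $a_1\in H_1$, the hypothesis killing all edges from $a_l$ into $C$, chords at $a_l$ killed by solidity, then $\delta(G)\ge 3$ forcing the contradiction and finally forcing $u$ as the unique extra neighbour). But the one step you flagged as "the main obstacle" is in fact broken. You pair the chord cycle $C_4=a_{l'}P'a_la_{l'}$ with the big odd cycle $C$ and claim $G-(V(C)\cup V(C_4))$ has a perfect matching because "$M_1$ leaves $V(C)$ and $V(C_4)$ matched internally." Neither fact is true: $M_1$ is the unique perfect matching of $G-\{u,v_1\}$, so $v_1\in V(C)$ is not covered by $M_1$ at all (the edges $v_1w_1$ and $v_1v_t$ you invoke are not in $M_1$), and $a_{l'}$ is matched by $M_1$ to $a_{l'-1}\notin V(C_4)$. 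More importantly, after deleting $V(C)\cup V(C_4)$ the remaining graph is $\{u\}\cup\bigl(V(H_1)\setminus V(C_4)\bigr)$, in which both $u$ (never covered by $M_1$, and possibly with all its neighbours on $C$) and $a_{l'-1}$ are exposed, with no augmenting path between them guaranteed. So your solidity contradiction does not go through, and no local repair of the matching fixes it.

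The paper's proof chooses the second odd cycle differently: it takes a triangle through $u$, namely $uv_{t-1}v_tu$ or $uv_1v_2u$ (both exist by Lemma~\ref{3-subgraph}), with the choice depending on where $y$ sits on $C$. This does two things at once: it deletes $u$, so the $M_1$-exposed hub never needs a partner, and it is positioned so that an $M_1$-alternating path running from the exposed vertex of $C$ along $C$ to $y$ and then down $yy_1\cdots y_{j-1}$ has the right parity to reroute the exposure and absorb $y_{j-1}$, yielding a genuine perfect matching of the complement. (The paper also handles the case $l'$ odd directly via an $M_1$-alternating cycle of $G-\{u,v_1\}$ and Lemma~\ref{lemma31}, which you correctly subsume by citing the $H_1$-version of Lemma~\ref{lemma33}.) To repair your argument, replace the pair $(C,C_4)$ by $(C_4, uv_{t-1}v_tu)$ or $(C_4, uv_1v_2u)$ and verify the parity of the rerouting path as the paper does.
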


\begin{proof}
Since  $y\in V_1\cup V(P_1)$, $y$ lies in $C$.   Recall that each edge of $G$ not in $M_1'\backslash  E(P_1)$ is an open $M_1'\backslash  E(P_1)$-alternating path in $G$.
Assume that  no vertex of  $C$ is reachable from $y$ by an open $M_1'\backslash  E(P_1)$-alternating path in $G$ unless the path is an edge  in $E(C)$ incident with  $y$,
Then  $y$ has no more neighbour in $C$ other than the two neighbours.

Recall that $H_1=G-(V(P)\cup V(P_1)\cup\{u\})$ and $M_1\cap E(H_1)$ is the unique perfect matching of $H_1$.
If $y$ has a neighbour $y_1$ in $H_1$, let $P''=y_1y_2\cdots y_l$ be
a maximal $M_1$-alternating path in $H_1$ starting at $y_1$ with $y_1y_2\in M_1$.
Then  $y_{l-1}y_l\in M_1$ and the vertex $y_l$ has no neighbour in $H_1-V(P'')$ by the choice of  $P''$.
Moreover, by the above  assumption, the vertex $y_l$  has no neighbour in $C$.
Consequently, the neighbours of $y_l$ belong to $V(P'')\cup\{u\}$.

Next, we show that $N(y_l)\cap V(P'')=\{y_{l-1}\}$.
Suppose, to the contrary,  that $y_ly_j\in E(G)$, where $1\leq j\leq l-2$. Let $C_1=y_lP''y_jy_l$.
If $j$ is odd, then $C_1$ is an $M_1$-alternating cycle of $G-\{u,v_1\}$,  contradicting Lemma \ref{lemma31}.
If $j$ is even, then $C_1$ is an odd cycle in $G$.
By Lemma \ref{3-subgraph}, we have $uv_2\in E(G)$ and $uv_{t-1}\in E(G)$.
If $y\in (V_1\backslash\{v_t\})\cup\{w_2,w_4,\ldots,w_{t_1}\}$, let $C_2=uv_{t-1}v_tu$;
otherwise, let $C_2=uv_1v_2u$.
Then $C_1$ and $C_2$ are two vertex disjoint odd cycles of $G$ such that $G-(V(C_1)\cup V(C_2))$ has a perfect matching, a contradiction.
So $N(y_l)\cap V(P'')=\{y_{l-1}\}$.

From the preceding discussion, we see that  $N(y_l)=\{y_{l-1}\}$ or $N(y_l)=\{y_{l-1},u\}$, contradicting the fact that $\delta(G)\geq3$.
Therefore, the vertex $y$ has no neighbour in $H_1$.
It follows that $y$ has no  neighbour in $C\cup H_1$ other than the two neighbours in $C$.
Since $\delta(G)\geq3$, $y$ has exactly one more neighbour $u$ in $G$ other than the  two neighbours in $C$, and so $d(y)=3$.
The result holds.
\end{proof}

 The following two lemmas state that each vertex in $V_1\cup V(P_1)$ has degree  three in $G$ and is adjacent to the vertex $u$.

\begin{lemma}\label{v1}
For each vertex $v_i$ in $V_1\backslash\{v_1,v_t\}$,  $N(v_i)=\{v_{i-1},v_{i+1},u\}$.
\end{lemma}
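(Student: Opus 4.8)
The plan is to reduce the statement to Lemma \ref{v1-p1} by verifying its hypothesis for each $v_i \in V_1 \setminus \{v_1, v_t\}$: namely, that no vertex of the odd cycle $C$ is reachable from $v_i$ by an open $M_1' \setminus E(P_1)$-alternating path unless that path is a single edge of $C$ incident with $v_i$. Since $v_i \in V_1$ lies on $P$, hence on $C$, and its two $C$-neighbours are $v_{i-1}, v_{i+1}$ (the edges $v_{i-1}v_i, v_iv_{i+1}$ are in $E(P) \subseteq E(C)$), once we establish this reachability condition Lemma \ref{v1-p1} immediately yields $d(v_i) = 3$ and $N(v_i) = \{v_{i-1}, v_{i+1}, u\}$, which is exactly what we want.

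So the core of the argument is the reachability claim, and I would prove it by contradiction along the lines of Claims \ref{claim31} and \ref{claim32} in the proof of Lemma \ref{3-subgraph}. Suppose some vertex $x$ of $C$ is reachable from $v_i$ by an open $M_1' \setminus E(P_1)$-alternating path $Q$ that is not an edge of $C$ at $v_i$. First I would argue, exactly as before, that $E(Q) \cap (E(P) \cup E(P_1)) = \emptyset$: if $|E(Q)| = 1$ this is forced by the assumption that $Q$ is not a $C$-edge at $v_i$, and if $|E(Q)| \geq 3$ then all internal vertices of $Q$ are $M_1'$-covered, hence lie in $H_1 = G - (V(P) \cup V(P_1) \cup \{u\})$. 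The vertex $x$ lies either on $P$ or on $P_1$ (the two arcs making up $C$). If $x$ lies on $P$, then $Q$ is in particular an open $M_1'$-alternating path between a vertex of $V_1$ and $x$, so Lemma \ref{lemma32} forces $x \in V_1$; then closing up $C_2 := v_i P x Q v_i$ (or $C_2 := v_i Q v_i$ if $x = v_i$) gives an odd cycle disjoint from a suitable odd sub-cycle $C_1$ of $C$ living on the other arc (using $F_1 \subseteq G$ so that $v_1$–$v_t$ closes up, and that $M_1'\setminus E(P_1)$-alternating forces $Q$ to avoid $P_1$), and the complement retains a perfect matching, contradicting solidity. If instead $x = w_j$ lies on $P_1$, I would split on the parity of $j$ and build an $M_1$- or $M_2$-alternating cycle in $G - \{u, v_1\}$ or $G - \{u, v_t\}$ using the arc $w_1 P_1 w_j$ together with $v_1 P v_i Q$, contradicting Lemma \ref{lemma31}; the parity bookkeeping here is essentially the same as in Claim \ref{claim32}.

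The step I expect to be the main obstacle is the careful case analysis for the endpoint $x$: one must track the position of $x$ on $C$ relative to $v_i$ (is it on the $P$-side, on the $P_1$-side, before or after $v_i$ along $P$?), choose the two disjoint odd cycles $C_1, C_2$ correctly in each subcase, and in the cases reducing to Lemma \ref{lemma31} get the alternation and the parity of the resulting closed walk right so that it is genuinely an $M_1$- or $M_2$-alternating cycle. A secondary subtlety is the degenerate possibility $V(P_1) = \emptyset$, where $C = v_1 P v_t v_1$ and the edge $v_1 v_t$ plays the role of the $P_1$-arc; this case is lighter since only the "$x$ on $P$" subcase survives. Once the reachability claim is in hand, the rest is a direct appeal to Lemma \ref{v1-p1}.
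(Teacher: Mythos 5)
Your overall strategy coincides with the paper's: verify the reachability hypothesis of Lemma \ref{v1-p1} for $v_i$ and then invoke that lemma. The reduction itself is fine. However, both of your subcases have execution problems, and one of them is a genuine gap rather than a bookkeeping detail.

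The serious issue is the subcase where the endpoint $x=w_j$ lies on $P_1$. You propose to close up $v_1Pv_iQ\,w_jP_1w_1v_1$ into an $M_1$- or $M_2$-alternating cycle and contradict Lemma \ref{lemma31}, asserting that ``the parity bookkeeping is essentially the same as in Claim \ref{claim32}.'' It is not: in Claim \ref{claim32} the base vertex $v^*$ lies in $V_2$, so its $M_2$-edge on $P$ is $v_{k-1}v_k$ and the concatenation at $v^*$ alternates. Here $v_i\in V_1$ with $i$ odd, so $v_{i-1}v_i\in M_1\setminus M_2$ and $v_iv_{i+1}\in M_2\setminus M_1$, while the edge of $Q$ at $v_i$ lies in neither matching (its matching edges are in $M_1'\setminus E(P_1)\subseteq M_1\cap M_2$ and cover only internal vertices). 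Hence any cycle using one $P$-edge and one $Q$-edge at $v_i$ has two consecutive non-$M_2$ edges (if it uses $v_{i-1}v_i$) or two consecutive non-$M_1$ edges (if it uses $v_iv_{i+1}$ and is meant to live in $G-\{u,v_1\}$, where tracing the forced $M_1$-alternation down $P$ dead-ends at $v_2$). So the alternating-cycle route is unavailable for interior $v_i\in V_1$, which is exactly why the paper switches to the solidity argument here: for $j$ even it takes the odd cycle $v_1Pv_iQ\,w_jP_1w_1v_1$ together with the triangle $uv_{t-1}v_tu$, and for $j$ odd the odd cycle $v_iPv_tw_{t_1}P_1w_jQv_i$ together with $uv_1v_2u$, these triangles existing by Lemma \ref{3-subgraph}(i).

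The secondary issue is your choice of the second odd cycle in the subcase $x=v_q\in V(P)$. A cycle ``living on the other arc of $C$'' (the natural candidate being $uv_1w_1P_1w_{t_1}v_tu$) does not leave a perfectly matchable complement: after deleting it and $v_qPv_iQv_q$, the segment $v_2\cdots v_{q-1}$ has odd order and its $V_2$-endpoint $v_{q-1}$ has all of its neighbours $v_{q-2},v_q,u$ deleted or internal to the segment, so it cannot be saturated. The correct move, again, is the hub triangle $uv_{t-1}v_tu$ (when $q<t$) or $uv_1v_2u$ (when $q=t$), which leaves only even $M_1$-segments of $P$, all of $P_1$, and $H_1$ minus the interior of $Q$, each of which carries its restriction of $M_1$. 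With these two repairs your argument becomes the paper's proof.
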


\begin{proof}
If we can show the assertion that no vertex of  $C$ is reachable  from $v_i$ by an open $M_1'\backslash  E(P_1)$-alternating path in $G$ unless the path is an  edge in $E(P)$ incident with the vertex $v_i$, then  the result holds from Lemma \ref{v1-p1}.
We now establish   this assertion by contradiction.
Recall that $V(C)=V(P_1)\cup V(P)$. By Lemma \ref{3-subgraph},  $uv_2\in E(G)$ and $uv_{t-1}\in E(G)$.

If there exists a vertex $v_q$ in $P$ such that $v_q$ is reachable  from $v_i$
by an open $M_1'\backslash  E(P_1)$-alternating path $Q$ in $G$ besides the path is an edge $v_qv_i$ in $E(P)$,
then $E(Q)\cap E(C)=\emptyset$ and all internal vertices of the path $Q$ lie in the graph $H_1$.
By Lemma \ref{3-subgraph}(i), $v_q\in V_1$.
It  is possible that $v_q=v_i$.
If $v_q=v_i$, let $C_1=v_iQv_i$; otherwise, let $C_1=v_iPv_qQv_i$.
If $q<t$,  let $C_2=uv_{t-1}v_tu$; otherwise, let $C_2=uv_1v_2u$.
Then $C_1$ and $C_2$ are two vertex disjoint odd cycles of $G$ such that $G-(V(C_1)\cup V(C_2))$ has a perfect matching, a contradiction.

If there exists a vertex $w_j$ in $P_1$ such that $w_j$ and $v_i$ are connected
by an open $M_1'\backslash  E(P_1)$-alternating path $Q'$ in $G$, where $1\leq j\leq t_1$,
then $E(Q')\cap E(C)=\emptyset$ and all internal vertices of the path $Q'$ lie in the graph $H_1$.
If $j$ is even, let $C_3=v_1Pv_iQ'w_jP_1w_1v_1$  and $C_4=uv_{t-1}v_tu$; otherwise, let $C_3=v_iPv_tw_{t_1}P_1w_jQ'v_i$ and $C_4=uv_1v_2u$.
Then $C_3$ and $C_4$ are two vertex disjoint odd cycles of $G$ such that $G-(V(C_3)\cup V(C_4))$ has a perfect matching, a contradiction.
Hence, the assertion  holds.
\end{proof}

\begin{lemma}\label{p1}
Each vertex in $V(P_1)\cup\{v_1,v_t\}$ has exactly one more neighbour $u$ in $G$ other than the  two neighbours in $C$.
\end{lemma}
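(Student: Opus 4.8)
The plan is to handle the vertices of $V(P_1)$ and the two special vertices $v_1,v_t$ by exactly the same kind of argument that establishes Lemma \ref{v1}, feeding everything into Lemma \ref{v1-p1}. For a vertex $y\in V(P_1)\cup\{v_1,v_t\}$ we must verify the hypothesis of Lemma \ref{v1-p1}: no vertex of $C$ is reachable from $y$ by an open $M_1'\setminus E(P_1)$-alternating path in $G$, except along an edge of $E(C)$ incident with $y$. Once that is shown, Lemma \ref{v1-p1} immediately gives $d(y)=3$ and the extra neighbour $u$, which is the conclusion. So the whole proof reduces to establishing this reachability assertion for each such $y$.

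First I would dispose of $v_1$ and $v_t$. By Lemma \ref{3-subgraph} we already know $N(v_1)=\{v_2,w_1,u\}$ (or $\{v_2,v_t,u\}$ if $V(P_1)=\emptyset$) and $N(w_{t_1})=\{w_{t_1-1},v_t,u\}$, and symmetrically $N(w_1)=\{w_2,v_1,u\}$; and the conclusion we want for $v_1$ is precisely that $v_1$ has exactly the two $C$-neighbours plus $u$, which is already contained in Lemma \ref{3-subgraph}. The same holds for $v_t$ and for $w_1,w_{t_1}$. So the genuine work is only for the internal vertices $w_p$ of $P_1$ with $2\le p\le t_1-1$. For such a $w_p$ I would argue by contradiction exactly as in Lemma \ref{v1}: suppose some vertex $z$ of $C$ is reachable from $w_p$ by an open $M_1'\setminus E(P_1)$-alternating path $Q$ that is not an incident edge of $E(C)$. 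Then $E(Q)\cap E(C)=\emptyset$ and all internal vertices of $Q$ lie in $H_1$, so $Q$ is an open $M_1'$-alternating path with its interior in $H$; Lemma \ref{lemma32} then restricts where the $P$-part of $C$ can meet $Q$, and Lemma \ref{lemma33} (applied with $H_1$ in place of $H$) restricts the $P_1$-part. Splitting on the two cases $z\in V(P)$ and $z\in V(P_1)$, I build an odd cycle $C_1$ through $w_p$ and $Q$ (closing up inside $P_1$ when $z\in V(P_1)$, or through $P$ when $z\in V(P)$), and pair it with the short odd triangle $C_2=uv_1v_2u$ or $C_2=uv_{t-1}v_tu$ — both available since $uv_2,uv_{t-1}\in E(G)$ by Lemma \ref{3-subgraph} — chosen disjoint from $C_1$. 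Deleting $V(C_1)\cup V(C_2)$ leaves the rest of $H_1$ together with the remaining portion of $P_1$ and of $P$, all of which carry their unique perfect matchings, so $G-(V(C_1)\cup V(C_2))$ has a perfect matching, contradicting solidity. (In the one subcase mirroring Lemma \ref{lemma31}'s use — where the cycle one naturally writes down is $M_1$- or $M_2$-alternating in $G-\{u,v_1\}$ or $G-\{u,v_t\}$ rather than odd — I invoke Lemma \ref{lemma31} instead of solidity.)

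The main obstacle is the bookkeeping of the disjointness and parity of the two odd cycles in each subcase: one has to check that whichever endpoint $z$ of $Q$ occurs, and whichever parity of its index, the cycle $C_1$ one forms really is odd, and that one of the two triangles $uv_1v_2u$, $uv_{t-1}v_tu$ genuinely avoids $C_1$ (this is why the index comparison ``$q<t$ or not'' appears in Lemma \ref{v1}, and the analogue is needed here with $w_p$'s position in $P_1$). There is also the minor wrinkle that $Q$ might be a single edge, in which case ``internal vertices lie in $H_1$'' is vacuous but one still has $E(Q)\cap E(C)=\emptyset$ because $Q$ is not an incident edge of $E(C)$; this is handled exactly as in the proofs of Lemmas \ref{lemma32} and \ref{3-subgraph}. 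Once the reachability assertion is in hand for every $w_p$, Lemma \ref{v1-p1} finishes the proof, and combined with Lemma \ref{v1} we conclude that every vertex of $V_1\cup V(P_1)$ is adjacent to $u$ and has degree three.
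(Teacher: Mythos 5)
Your reduction to Lemma \ref{v1-p1} is the right frame, but the proposal has two genuine gaps. First, you dispose of $v_1$ and $v_t$ by claiming their neighbourhoods are ``already contained in Lemma \ref{3-subgraph}''; that lemma determines $N(w_1)$ and $N(w_{t_1})$, but \emph{not} $N(v_1)$ or $N(v_t)$ (a priori $v_1$ could have several neighbours in $H$, or be adjacent to $v_t$ in addition to $w_1$, etc.). The paper must and does treat $v_1,v_t$ as genuine cases, by relabelling them $w_0$ and $w_{t_1+1}$ and running them through the same reachability assertion as the interior of $P_1$.

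Second, and more seriously, your dichotomy --- ``either $C_1$ is odd and I pair it with a disjoint triangle against solidity, or it is alternating and I invoke Lemma \ref{lemma31}'' --- is false in the hardest subcase. When the path $Q$ joins $w_j$ ($j$ even) to $w_k$ ($k$ odd, $k>j$) along $P_1'$, the cycle $C_1=w_jP_1'w_kQw_j$ is \emph{even}, and because both the $P_1'$-segment and $Q$ begin and end with non-matching edges, $C_1$ is not $M_1$-alternating (nor $M_2$-alternating, since it avoids $E(P)$). So neither of your two tools applies. The paper's proof handles exactly this case with a different mechanism: it observes that $G-(\{v_2,u\}\cup V(C_1))$, $G-(\{v_2,v_1\}\cup V(C_3))$ and $G-(\{v_2,v_3\}\cup V(C_4))$ all have perfect matchings, where $C_3=uw_1P_1'v_tu$ and $C_4=uw_{t_1}P_1'v_1u$ are even cycles, so that all three edges $v_2u$, $v_2v_1$, $v_2v_3$ are nonsolitary --- contradicting Lemma \ref{2-nonsolitary}, i.e.\ using the standing hypothesis that every $b$-invariant edge is solitary. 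This appeal to Lemma \ref{2-nonsolitary} is the essential new idea of the proof of Lemma \ref{p1}, and without it (or some substitute) your argument cannot close this subcase. The remaining subcases ($k$ even: odd $C_1$ disjoint from $uv_{t-1}v_tu$; $k$ odd with $k<j$: an $M_1$-alternating cycle in $G-\{u,v_1\}$) do go through as you describe.
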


\begin{proof}
By Lemma \ref{3-subgraph}, the result holds for the two vertices $w_1$ and $w_{t_1}$ in $P_1$.
Recall that $t_1$ is even. For convenience, let $w_0=v_1$, $w_{t_1+1}=v_t$, and $P_1'=w_0w_1P_1w_{t_1}w_{t_1+1}$.
Let  $w_j$ be a vertex of  $P_1'$. We first consider the case that $j\in \{0, 2, 4, \ldots, t_1-2\}$.
Then $j$ is even.

We assert that no vertex of $P_1'$ is reachable  from $w_j$ by an open $M_1'\backslash  E(P_1)$-alternating path in $G$  unless the path is an edge in  $E(P_1')$ incident with $w_j$.
Suppose,  to the contrary, that there exists a vertex $w_k$ in $P_1'$ such that $w_k$ is reachable  from $w_j$ by an open $M_1'\backslash  E(P_1)$-alternating path $Q$ in $G$ and $Q$ is not an edge in  $E(P_1')$ incident with $w_j$.
Then $E(Q)\cap E(C)=\emptyset$ and all internal vertices of the path $Q$ lie in the graph $H_1$.
It  is possible  that $w_k=w_j$.
If $w_k=w_j$, let $C_1=w_jQw_j$; otherwise, let $C_1=w_jP_1'w_kQw_j$.
Let $C_2=uv_{t-1}v_tu$.
If $k$ is even, then $C_1$ and $C_2$ are two vertex disjoint odd cycles of $G$ such that $G-(V(C_1)\cup V(C_2))$ has a perfect matching, a contradiction.

So we assume that $k$ is odd.
Then $w_k\neq w_j$ and $C_1$ is an even cycle in  $G$.
If $k<j$, then $C_1$ is an $M_1$-alternating cycle of $G-\{u,v_1\}$,  contradicting Lemma \ref{lemma31}.
Thus  $k>j$.
Note that $G-(\{v_2,u\}\cup V(C_1))$ has a perfect matching.
Then the edge $v_2u$ is nonsolitary in $G$.
If $V(P_1)=\emptyset$, then $w_j=w_0=v_1$ and $w_k=w_{t_1+1}=v_t$.
It follows that the set $V(Q)\backslash\{v_1,v_t\}\neq\emptyset$ since $G$ is simple.
Therefore, we may assume that $V(P_1)\neq\emptyset$ by swapping $P_1'$ and $Q$ if necessary.
By Lemma \ref{3-subgraph}, we have $uw_1\in E(G)$ and $uw_{t_1}\in E(G)$.
Let $C_3=uw_1P_1'v_tu$ and $C_4=uw_{t_1}P_1'v_1u$.
Then both $C_3$ and $C_4$ are even in $G$.
Note that both  $G-(\{v_2,v_1\}\cup V(C_3))$ and $G-(\{v_2,v_3\}\cup V(C_4))$ have a perfect matching,
which implies that both $v_2v_1$ and $v_2v_3$ are nonsolitary in $G$.
Therefore, the vertex $v_2$ is incident with three nonsolitary edges in $G$, contradicting Lemma \ref{2-nonsolitary}.
The assertion holds.

By Lemmas \ref{3-subgraph} and \ref{v1},  we see that each vertex in $V(P)\backslash\{v_1,v_t\}$ has only three neighbours in $G$, one of which is $u$.

By the above  assertion  and Lemma \ref{v1-p1}, $w_j$ has exactly one more  neighbour $u$ in $G$ besides the two neighbours in $C$.
By symmetry, each vertex in $\{w_3,w_5,\ldots,w_{t_1-1}, w_{t_1+1}\}$ also has exactly one more neighbour $u$ in $G$ besides the  two neighbours in $C$.
The result holds.
\end{proof}

We now proceed to complete the proof of   Theorem \ref{main-theorem}.
By Lemmas \ref{3-subgraph}, \ref{v1} and \ref{p1}, every vertex of  $C$ has exactly one more neighbour $u$
in $G$ other than the two neighbours in $C$.
This implies that if  $V(H_1)\neq \emptyset$, then $u$ is a cut vertex of $G$, contradicting the fact that $G$ is 3-connected.
Hence, $V(H_1)=\emptyset$ and so $V(G)=V(C)\cup \{u\}$.
It follows  that $G$ is a wheel $W_n$.
The proof of Theorem \ref{main-theorem} is completed.  \\

\section*{Acknowledgements}
This work is supported by the National Natural Science Foundation of China (Nos. 12171440 and 12371361)
and the Natural Science Foundation of Henan Province (No. 252300421786).


\begin{thebibliography}{1}

\bibitem{BM08}
J.A. Bondy, U.S.R. Murty, Graph Theory, Springer-Verlag, Berlin, 2008.

\bibitem{CLM2002}
M.H. Carvalho, C.L. Lucchesi, U.S.R. Murty, On a conjecture of Lov\'{a}sz concerning bricks. I. The characteristic of a matching covered graph, J. Combin. Theory Ser. B 85 (2002) 94-136.

\bibitem{CLM2012}
M.H. Carvalho, C.L. Lucchesi, U.S.R. Murty, A generalization of Little's Theorem on Pfaffian
orientations, J. Combin. Theory Ser. B 102 (2012) 1241-1266.


\bibitem{ELP82}
J. Edmonds, L. Lov\'asz, W.R. Pulleyblank, Brick decompositions and the matching
rank of graphs, Combinatorica 2 (1982) 247-274.


\bibitem{KCLL2020}
N. Kothari, M.H. Carvalho, C.L. Lucchesi, C.H.C. Little, On essentially 4-edge-connected cubic bricks, Electron. J. Combin. 27 (2020) \#P1.22.


\bibitem{Lovasz1987}
L. Lov\'{a}sz, Matching structure and the matching lattice, J. Combin. Theory Ser. B 43 (1987) 187-222.


\bibitem{LFW2020}
F.L. Lu, X. Feng, Y. Wang, $b$-invariant edges in essentially 4-edge-connected near-bipartite cubic bricks, Electron. J. Combin. 27 (2020) \#P1.55.


\bibitem{LM2024}
C.L. Lucchesi, U.S.R. Murty, Perfect Matchings. Springer, 2024.





\bibitem{ZLZ2025+}
Y.X Zhang, F.L Lu and H.P Zhang, Cubic bricks that every $b$-invariant edge is forcing, arXiv:2411.17295.


\end{thebibliography}
\end{document}